\numberwithin{equation}{section}
\newcounter{hours}\newcounter{minutes}
\newcommand{\hourandminute}{\setcounter{hours}{\time/60}%
\setcounter{minutes}{\time-\value{hours}*60}%
\thehours.\theminutes}
\newcommand{\Versione}{\jobname\ \today\ \hourandminute}% premettere % per non
\newlength{\Indent}
\newlength{\Parskip}
\theoremstyle{plain}
\newtheorem{thm}{Theorem}[section]     %teoremi numerati nella sezione
\newtheorem{prop}[thm]{Proposition}%[section]
\theoremstyle{remark}
\newtheorem{Remark}[thm]{Remark}
\newenvironment{remark}{\begin{Remark}}{\qed\end{Remark}}
\theoremstyle{definition}
\newtheorem{defin}[thm]{Definition}
\def\dato{\Psi}
\def\X{{\mathcal X}}
\newcommand{\Cper}{\mathfrak{C}}
\newcommand{\R}{\boldsymbol{R}}
\newcommand{\RN}{\R^{N}}
\newcommand{\di}{\,\text{\rmfamily\upshape d}}
\newcommand{\pder}[2]{\frac{\partial #1}
            {\partial #2}}
\newcommand{\normaa}[1]{|||{#1}|||}
\newcommand{\normab}[1]{||||{#1}||||}
\newcommand{\oo}{\infty}
\newcommand{\Om}{\varOmega}
\newcommand{\eps}{\varepsilon}
\newcommand{\Norma}[2]{{|\kern-1pt|\kern-1pt|#1|\kern-1pt|\kern-1pt|_{#2}}}
\newcommand{\defeq}{:=}
\newdimen\dimux
\def\xxintot{\int\limits_{0}^{t} \dimux=10pt \kern-10pt\raise4pt
          \hbox to7pt{\hrulefill} \advance\dimux by -7pt \kern+\dimux}
\def\xxintl#1#2{\int\limits_{#1} \dimux=#2pt \kern-#2pt\raise4pt
                \hbox to7pt{\hrulefill} \advance\dimux by -7pt \kern+\dimux}
\def\xxiintl#1#2{\iint\limits_{#1} \dimux=#2pt \kern-#2pt\raise4pt
                \hbox to7pt{\hrulefill} \advance\dimux by -7pt \kern+\dimux}
\DeclareMathOperator{\Div}{div}
\newcommand{\ZZ}{\boldsymbol{Z}}
\newcommand{\NN}{\boldsymbol{N}}
\newcommand{\CC}{\mathcal{C}}
\newcommand{\dfint}{\sigma_{1}}
\newcommand{\dfout}{\sigma_{2}}
\newcommand{\dfboth}{\sigma}
\newcommand{\dfbotheps}{\sigma^\eps}
\newcommand{\dfav}{\sigma_{0}}
\newcommand{\Per}{E}
\newcommand{\Omint}{\Om_{1}^{\eps}}
\newcommand{\Omout}{\Om_{2}^{\eps}}
\newcommand{\Memb}{\varGamma^{\eps}}
\newcommand{\Perint}{\Per_{1}}
\newcommand{\Perout}{\Per_{2}}
\newcommand{\Permemb}{\varGamma}
\newcommand{\vi}{\text{\rmfamily\upshape v}}
\newcommand{\spazioaaa}{\CC^0([0,T];H^1(\Om))\times \CC^0([0,T];L^2(\Om;{\X}^1_\#(Y)))}
\newcommand{\spaziob}{\CC^0([0,1];L^2(\Om\times Y))}
\newcommand{\spaziobbbdiesis}{\CC^0_\#([0,1];{\X}^1(\Om_\eps))}
\newcommand{\spazioc}{\CC^0([0,1];L^2(\Om\times\Permemb))}
\newcommand{\spazioee}{H^1(\Om)\times L^2(\Om;{\X}^1_\#(Y))}
\newcommand{\spaziogperaa}{\CC_\#^0([0,1];H^1(\Om))\times \CC_\#^0([0,1];L^2(\Om;{\X}^1_\#(Y))}
\newcommand{\spaziogperoaa}{\CC_\#^0([0,1];H^1_0(\Om))\times \CC_\#^0([0,1];L^2(\Om;{\X}^1_\#(Y))}
\newcommand{\spaziogzero}{\CC_c^0(\R;H^1_0(\Om))\times \CC_c^0(\R;L^2(\Om;{\X}^1_\#(Y))}
\begin{document}

\title{Asymptotic decay under nonlinear and noncoercive dissipative
effects for electrical conduction in biological tissues}

\author{M. Amar -- D. Andreucci -- R. Gianni\\
\hfill \\
Dipartimento di Scienze di Base e Applicate per l'Ingegneria\\
Universit\`a di Roma ``La Sapienza"\\
Via A. Scarpa 16, 00161 Roma, Italy
}

\begin{abstract}
We consider a nonlinear model for electrical conduction in biological tissues. The
nonlinearity appears in the interface condition prescribed on the cell membrane.

The purpose of this paper is proving asymptotic convergence for large times to a periodic
solution when time-periodic boundary data are assigned.
The novelty here is that we allow the nonlinearity to be noncoercive. We consider
both the homogenized and the non-homogenized version of the problem.
\bigskip

\textsc{Keywords:}
Asymptotic decay, stability, nonlinear homogenization, two-scale techniques, electrical impedance tomography.

\textsc{AMS-MSC:}
35B40, 35B27, 45K05, 92C55

\end{abstract}

\maketitle
\pagestyle{myheadings}
\thispagestyle{plain}
\markboth{M. AMAR, D. ANDREUCCI, AND R. GIANNI}{ASYMPTOTIC ...}

\section{Introduction}\label{s:introduction}

We study here a problem arising in electrical conduction in biological tissues
with the purpose of obtaining some useful results for applications in
electrical tomography, see \cite{Amar:Andreucci:Bisegna:Gianni:2003b}, \cite{Amar:Andreucci:Bisegna:Gianni:2003a},
\cite{Amar:Andreucci:Bisegna:Gianni:2004b},
\cite{Amar:Andreucci:Bisegna:Gianni:2004a},
\cite{Amar:Andreucci:Bisegna:Gianni:2005},
\cite{Amar:Andreucci:Bisegna:Gianni:2006a},
\cite{Amar:Andreucci:Bisegna:Gianni:2009},
\cite{Amar:Andreucci:Bisegna:Gianni:2009a},
\cite{Amar:Andreucci:Bisegna:Gianni:2010},
\cite{Amar:Andreucci:Bisegna:Gianni:2013},
\cite{Amar:Andreucci:Gianni:2014a}.
Our interest in this framework is motivated by the fact that
composite materials have widespread applications in science and technology
and, for this reason, they have been extensively studied especially using
homogenization techniques.

From a physical point of view our problem consists in the study of the
electrical currents crossing a living tissue when an electrical potential is
applied at the boundary (see \cite{Bisegna:Caruso:Lebon:2000}, \cite{Bronzino:1999},
\cite{DeLorenzo:1997}, \cite{Foster:Schwan:1989}, \cite{Kandell:2000}).
Here the living tissue is regarded as a composite
periodic domain made of extracellular and intracellular materials (both
assumed to be conductive, possibly with different conductivities) separated
by a lipidic membrane which experiments prove to exhibit both conductive
(due to ionic channels in the membrane) and capacitive behavior.
The periodic microstructure calls for the use of an homogenization
technique. Among the wide literature on this topic, we recall for instance
\cite{Allaire:1992}, \cite{Allaire:Briane:1996}, \cite{Allaire:Damlamian:Hornung:1995}, \cite{Auriault:Ene:1994},
\cite{Bellieud:Bouchitte:1998}, \cite{Bellieud:Bouchitte:2002},
\cite{Cioranescu:Donato:1988}, \cite{Clark:Packer:1997},
\cite{Hummel:2000}, \cite{Krassowska:Neu:1993}, \cite{Lene:Leguillon:1981}, \cite{Lipton:1998}, \cite{Malte:Bohm:2008},
\cite{Timofte:2013}.
As a result of the homogenization procedure we obtain a system of partial differential
equations satisfied by the macroscopic electrical potential $u$, which is the
limit of the electrical potential $u_{\eps }$ in the tissue as $\eps $
(the characteristic length of the cell) tends to zero.

Different scalings may appear in this homogenization procedure and they are studied in
\cite{Amar:Andreucci:Bisegna:Gianni:2006a} and \cite{Amar:Andreucci:Bisegna:Gianni:2013}.
We study here further developments of the model proposed in
\cite{Amar:Andreucci:Bisegna:Gianni:2003b},
\cite{Amar:Andreucci:Bisegna:Gianni:2003a},
\cite{Amar:Andreucci:Bisegna:Gianni:2004a},
\cite{Amar:Andreucci:Bisegna:Gianni:2006a},
\cite{Amar:Andreucci:Bisegna:Gianni:2010},
\cite{Amar:Andreucci:Bisegna:Gianni:2009a},
\cite{Amar:Andreucci:Bisegna:Gianni:2013},
where the magnetic field is neglected (as
suggested by experimental evidence) and the potential $u_{\eps }$ is assumed to
satisfy an elliptic equation both in the intracellular and in the
extracellular domain (see, \eqref{eq:PDEin} below) while, on the membranes it satisfies the
equation
\begin{equation*}
\frac{\alpha}{\eps}\frac{\partial}{\partial t}[u_{\eps}] +
   f\left(\frac{[u_{\eps}]}{\eps}\right) =\dfbotheps \nabla u_{\eps} \cdot \nu_{\eps}
\end{equation*}
where $\left[ u_{\eps}\right] $ denotes the jump of the potential
across the membranes and $\dfbotheps \nabla u_{\eps} \cdot \nu_{\eps}$
is the current crossing the membranes.
From a mathematical point of view a big difference does exist between the case of
linear $f$ and the nonlinear case, as already pointed out in \cite{Amar:Andreucci:Bisegna:Gianni:2013}
and \cite{Amar:Andreucci:Gianni:2014a}.

At least in the linear case, the asymptotic behavior of the potentials $u_\eps$ and $u$ is
crucial in order to validate the phenomenological model employed in bioimpedance tomography
devices, which currently relies on the use of complex elliptic equations, see
\cite{Amar:Andreucci:Bisegna:Gianni:2009}--\cite{Amar:Andreucci:Bisegna:Gianni:2010}.

Motivated by the previous considerations, in \cite{Amar:Andreucci:Gianni:2014a} and in this paper we
investigate the behavior as $t\to + \infty$ of the nonlinear problem introduced
in \cite{Amar:Andreucci:Bisegna:Gianni:2013}.

In \cite{Amar:Andreucci:Gianni:2014a}, we proved that, if periodic boundary data are assigned and $f$ is
coercive in the following sense
\begin{equation}\label{eq:intro1}
f\in\CC^1(\Om)\,,\qquad\quad f^\prime(s)\geq \kappa >0\,,\qquad \forall s\in\R\,,
\end{equation}
for a suitable $\kappa>0$, then the solution of the $\eps $-problem converges as $t\to
+\infty $ to a periodic function solving a suitable system of equations. In
that case such a convergence was proved to be exponential.
A similar asymptotic exponential behavior was proved for the solution of the
homogenized problem. Similar results in different frameworks can be found in \cite{Fabrizio:Lazzari:1986},
\cite{Fabrizio:Morro:1988}, \cite{Giorgi:Naso:Pata:2001}, \cite{Giorgi:Naso:Pata:2005}.

It is important to note that in \cite{Amar:Andreucci:Bisegna:Gianni:2009}--\cite{Amar:Andreucci:Bisegna:Gianni:2010}
our approach was based on eigenvalue estimates which
made it possible to keep into account (as far as the asymptotic rate of convergence is concerned)
both the dissipative properties of the intra/extra cellular phases and the dissipative properties of
the membranes.

Instead, in the nonlinear but coercive case, we proceed by exploiting the coercivity of $f$,
hence the electrical properties of the intra/extra cellular phases do not appear in the
rate of convergence.

If $f$ is not coercive it must be assumed to be
monotone increasing and we proceed via a Liapunov-style technique so that the rate of convergence is not
quantified.

\medskip

The paper is organized as follows: in Section \ref{s:prel} we present the
geometrical setting and the nonlinear differential model governing our problem at the microscale $\eps$.
In Section \ref{s:asymptotic1}
we prove the decay in time of the solution of the microscopic problem.
Finally, in Section \ref{s:asymptotic_hom} we prove the decay in time
of the solution of the macroscopic (or homogenized) problem, providing also the differential system
satisfied by such asymptotic limit.

\bigskip

\section{Preliminaries}
\label{s:prel}

Let $\Om$ be an open bounded subset of $\R^N$. In the sequel $\gamma$ or $\widetilde\gamma$ will
denote constants which may vary from line to line and which depend on the characteristic
parameters of the problem, but which are independent of the quantities tending to zero,
such as $\eps$, $\delta$ and so on, unless explicitly specified.

\subsection{The geometrical setting}\label{ss:geometry}
The typical geometry we
have in mind is depicted in Figure \ref{fig:omega}.
\begin{figure}[htbp]%
\begin{center}%
\begin{pspicture}(12,6)
% first example
\rput(0,1){
%\rput(-.5,2){\mbox{i)}}
\psframe[linewidth=0.4pt,linestyle=dashed](0,0)(4,4)
%\psellipse[fillstyle=solid,fillcolor=lightgray](1,2)(.5,1)
\psccurve[fillstyle=solid,fillcolor=lightgray](2,.5)(2.5,2)(2,3.5)(1.5,3)(1.5,2)(1,1)
%\psccurve[fillstyle=solid,fillcolor=white](3,.7)(3.2,2)(2.8,2.2)(2.5,1.5)(2.4,1)
%\pscircle[fillstyle=solid,fillcolor=lightgray](2.9,1.5){.2}
}
%% whole domain
\newcommand{\minicell}{\scalebox{0.1}{
\psccurve[fillstyle=solid,fillcolor=lightgray](2,.5)(2.5,2)(2,3.5)(1.5,3)(1.5,2)(1,1)
}}
\rput(6,1){
% \rput(4,0){\minicell}%
\multirput(0,0.4)(0.4,0){3}{\minicell}%
\multirput(2.4,0.4)(0.4,0){7}{\minicell}%
\multirput(0.4,0.8)(0.4,0){12}{\minicell}%
\multirput(0,1.2)(0,0.4){5}{\multirput(0,0)(0.4,0){14}{\minicell}}%
\multirput(0.4,3.2)(0.4,0){13}{\minicell}%
\multirput(0.4,3.2)(0.4,0){13}{\minicell}%
\multirput(0.4,3.6)(0.4,0){13}{\minicell}%
\multirput(0.8,4)(0.4,0){12}{\minicell}%
\multirput(1.6,4.4)(0.4,0){9}{\minicell}%
\multirput(2.4,4.8)(0.4,0){6}{\minicell}%
% \multirput(0,4.4)(0,0.4){2}{\multirput(0,0)(0.4,0){14}{\minicell}}%
\psccurve(0.2,.1)(1.5,.5)(5,.3)(5,5)(.5,4)(0,1)
}

\end{pspicture}%
\caption{\small On the left: an example of admissible periodic unit
cell $Y=\Perint\cup\Perout\cup\Permemb$ in $\R^{2}$. Here $\Perint$
is the shaded region and $\Permemb$ is its boundary. The remaining
part of $Y$ (the white region) is $\Perout$. On the right: the
corresponding domain $\Om=\Omint\cup\Omout\cup\Memb$. Here $\Omint$
is the shaded region and $\Memb$ is its boundary. The remaining part
of
$\Om$ (the white region) is $\Omout$.}%
\label{fig:omega}%
\end{center}%
\end{figure}%
In order to be more specific, assume $N\geq2$
and let us introduce a periodic open subset $\Per$ of $\RN$, so that
$\Per+z=\Per$ for all $z\in\ZZ^{N}$. For all $\eps>0$ define
$\Omint=\Om\cap\eps \Per$, $\Omout=\Om\setminus\overline{\eps \Per}$.
We assume that $\Om$, $\Per$ have regular boundary, say of class
${\mathcal C}^{\oo}$ for the sake of simplicity, and
$dist(\Memb,\partial\Om)\geq \gamma\eps$, where
$\Memb=\partial\Omint$.  We also employ the notation $Y=(0,1)^{N}$,
and $\Perint=\Per\cap Y$, $\Perout=Y\setminus\overline{\Per}$,
$\Permemb=\partial\Per\cap \overline Y$. As a simplifying assumption,
we stipulate that $\Perint$ is a connected smooth subset of $Y$ such that
$dist(\Permemb,\partial Y)>0$.  We denote by
$\nu$ the normal unit vector to $\Permemb$ pointing into $\Perout$, so
that $\nu_\eps(x)=\nu(\eps^{-1}x)$.

For later use, we introduce also the conductivity
\begin{equation*}
  \dfboth(y)=\begin{cases}\dfint & \text{if $y\in \Perint$,}\\ \dfout & \text{if $y\in\Perout$,}
  \end{cases}
  \qquad\text{and}\qquad
   \dfav=|\Perint|\dfint+|\Perout|\dfout\,,
\end{equation*}
where $\dfint,\dfout$ are positive constants, and we also set
$\dfbotheps(x)=\dfboth(\eps^{-1}x)$.
Moreover, let us set
\begin{equation*}
\Cper^k_\#(Y):=\{u :Y\setminus\Permemb\to\R\ |\
u_{\mid \Perint}\in {\mathcal C}^k(\overline\Perint)\,,
  \ u_{\mid \Perout}\in {\mathcal C}^k(\overline\Perout)\,,\ \text{and}\ u\ \text{is } Y-\text{periodic}\}\,,
\end{equation*}
for every $0\leq k\leq +\infty$, and
\begin{equation*}
{\X}^1_\#(Y):=\{u\in L^2(Y)\ |\
u_{\mid \Perint}\in H^1(\Perint)\,,
  \ u_{\mid \Perout}\in H^1(\Perout)\,,\ \text{and}\ u\ \text{is } Y-\text{periodic}\}\,.
\end{equation*}
More generally, the subscript $\#$ in the definition of a function space will denote periodicity with respect
to the first domain, in such a way that the extended function remains (locally) in the same space.

We set also
\begin{equation*}
{\X}^1(\Om_\eps):=\{u\in L^2(\Om)\ |\
u_{\mid \Omint}\in H^1(\Omint),\ u_{\mid \Omout}\in H^1(\Omout)\}\,.
\end{equation*}
We note that, if $u\in {\X}^1_\#(Y)$ then the traces of $u_{\mid \Per_i}$ on $\Permemb$, for $i=1,2$,
belong to $H^{1/2}(\Permemb)$, as well as
$u\in {\X}^1(\Om_\eps)$ implies that the traces of $u_{\mid \Om^\eps_i}$ on $\Memb$, for $i=1,2$,
belong to $H^{1/2}(\Memb)$.

\subsection{Statement of the problem}
\label{ss:statment}

We write down the model problem:
\begin{alignat}2
  \label{eq:PDEin}
  -\Div(\dfboth^\eps \nabla u_{\eps})&=0\,,&\qquad &\text{in $(\Omint\cup\Omout)\times(0,T)$;}\\
%  \label{eq:PDEout}
%  -\Div(\dfout \nabla u_{\eps})&=0\,,&\qquad &\text{in $\Omout\times(0,T)$;}\\
  \label{eq:FluxCont}
  [\dfbotheps \nabla u_\eps \cdot \nu_{\eps}] &= 0
  \,,&\qquad &\text{on
  $\Memb\times(0,T)$;}\\
  \label{eq:Circuit}
   \frac{\alpha}{\eps}\frac{\partial}{\partial t}[u_{\eps}] +
   f\left(\frac{[u_{\eps}]}{\eps}\right) &=\dfbotheps \nabla u_{\eps} \cdot \nu_{\eps}\,,&\qquad
  &\text{on $\Memb\times(0,T)$;}\\
  \label{eq:InitData}
  [u_{\eps}](x,0)& =S_{\eps}(x) \,,&\qquad
  &\text{on $\Memb$;}\\
  \label{eq:BoundData}
    u_{\eps}(x)&=\dato(x,t)\,,&\qquad&\text{on $\partial\Om\times(0,T)$,}
\end{alignat}
where $\dfboth^\eps$ is defined in the previous subsection and $\alpha>0$ is a constant.
We note that, by the definition already given in the previous section,
$\nu_{\eps}$ is the normal unit vector to $\Memb$ pointing into $\Omout$.
Since $u_{\eps}$ is not in general continuous across $\Memb$ we set
\begin{equation*}
  u_{\eps}^{(1)} \defeq \text{ trace of $u_{\eps\mid \Omint}$ on
  $\Memb\times(0,T)$;}\quad
  u_{\eps}^{(2)} \defeq \text{ trace of $u_{\eps\mid
  \Omout}$ on $\Memb\times(0,T)$.}
\end{equation*}
Indeed we refer conventionally to $\Omint$ as to the
\textit{interior domain},
and to $\Omout$ as to the \textit{outer domain}. We also denote
\begin{equation*}
  [u_{\eps}] \defeq u_{\eps}^{{(2)}} - u_{\eps}^{{(1)}}\,.
\end{equation*}
Similar conventions are employed for other quantities, for example in
\eqref{eq:FluxCont}.
In this framework we will assume that
\begin{equation}\label{eq:assumpt2}
i)\quad S_\eps\in H^{1/2}(\Memb)\,,\qquad\qquad
ii)\quad \int_{\Memb}S^2_\eps(x)\di\sigma \leq \gamma\eps\,,\\
\end{equation}
where the second assumption in \eqref{eq:assumpt2} is  needed in order that the solution of system \eqref{eq:PDEin}--\eqref{eq:BoundData} satisfies the classical energy inequality. (see (3.1) in
\cite{Amar:Andreucci:Gianni:2014a}).

Moreover, $f:\R\to\R$ satisfies
\begin{alignat}1
\label{eq:assumpt1}
& f\in \CC^1(\R)\,,
\\
\label{eq:a37}
& f\ \text{is a strictly monotone increasing function,}
\\
\label{eq:assumpt1bis}
& f(0)=0\,,\\
\label{eq:a38}
& f^{\prime }(s)\geq \delta_0\,,\qquad  \text{for a suitable $\delta_0>0$ and $\forall |s|$ sufficiently
large.}
\end{alignat}
%In particular, denoting by $L_-,\ L_+>0$ the Lipschitz constant for $f$ from below and from above, respectively,
%we can write
%$$
%-L_-(s_1-s_2)\leq f(s_1)-f(s_2)\leq L_+(s_1-s_2)\qquad \forall s_1,s_2\in\R\quad\hbox{with }s_1>s_2\,.
%$$
%The left-hand side inequality, together with assumption $ii)$ in \eqref{eq:assumpt1}, implies also that
%\begin{equation}\label{eq:a84}
%f(s)s\geq -L_-s^2\,, \qquad \forall s\in\R\,.
%\end{equation}
%We will assume also the following conditions
The previous assumptions imply also
\begin{equation}
\label{eq:a36}
f(s)s \geq \lambda_1 s^{2}-\lambda_2|s|\,,\qquad \text{for some constants $\lambda_1>0$ and $\lambda_2\geq 0$.}
\end{equation}

Notice that the results presented in this paper hold also in a more general case, namely if we replace condition \eqref{eq:a38} with the assumption that $f^{-1}$ is uniformly continuous in $\R$, for example when $f(s)= s+\sin s$.

Finally, $\dato:\Om\times\R\to\R$ is a function satisfying the following assumptions
\begin{equation}\label{eq:h1}
\begin{aligned}
i)\ \ \quad & \dato\in L^2_{loc}\big(\R;H^{2}(\Om)\big)\,;\\
ii)\ \quad & \dato_t\in L^2_{loc}\big(\R;H^1(\Om)\big)\,;\\
iii)\quad & \text{$\dato(x,\cdot)$ is $1$-periodic}\quad & \text{for a.e. $x\in\Om$.}
\end{aligned}
\end{equation}
%These assumptions will guarantee the time asymptotic decay of the solution $u_\eps$, when $\eps$ is fixed.
%The set of equations \eqref{eq:PDEin}--\eqref{eq:BoundData} models electrical conduction in a biological
%tissue. It is important to notice that the first term in the left hand side
%of \eqref{eq:Circuit} models the behavior of the lipidic cell membrane
%which acts mainly as a capacitor, while the second term in the left hand
%side keeps into account the resistive behavior of
%the membrane which is caused by channels allowing charged molecules
%to go through. Here the resistive behavior is assumed to be
%nonlinear and it is relevant that the small parameter $\eps ,$
%which is of the order of magnitude of the cell width, appears inside the
%argument of $f$.
Existence and uniqueness for problem \eqref{eq:PDEin}--\eqref{eq:BoundData} has been proved in \cite{Amar:Andreucci:Bisegna:Gianni:2005}.
Moreover, by \cite[Lemma 4.1 and Remark 4.2]{Amar:Andreucci:Gianni:2014a} it follows that the solution
$u_\eps\in\CC^0\big((0,T];\X^1(\Om_\eps)\big)$ and $[u_\eps]\in \CC^0\big((0,T];L^2(\Memb)\big)$, uniformly
with respect to $\eps$, and $[u_\eps]\in \CC^0\big([0,T];L^2(\Memb)\big)$, but with non uniform estimates.

\section{Asymptotic convergence to a periodic solution of the $\eps $-problem}\label{s:asymptotic1}
The purpose of this section is to prove the asymptotic convergence of the
solution of problem \eqref{eq:PDEin}--\eqref{eq:BoundData} to a periodic function
$u^{\#}_\eps$ when $t\to+\infty $.
The function $u_{\eps}^{\#}$ is, in turn, a solution of the system
\begin{alignat}2
    \label{eq:per_PDEboth}
    -\Div(\dfbotheps \nabla u_{\eps}^{\#})&=0\,,&\quad &\text{in $(\Omint\cup\Omout)\times\R$;}\\
    \label{eq:per_FluxCont}
    [\dfbotheps \nabla u_{\eps}^{\#} \cdot \nu_\eps] &= 0 \,,&\ &\text{on $\Memb\times\R$;}\\
    \label{eq:per_Circuit}
    \frac{\alpha}{\eps} \pder{}{t}[u_{\eps}^{\#}] + f\left(\frac{[u_\eps^{\#}]}{\eps}\right)
    &=(\dfbotheps \nabla u_{\eps}^{\#} \cdot \nu_\eps)\,,&\ &\text{on $\Memb\times\R$;}\\
    \label{eq:per_BoundData}
    u_{\eps}^{\#}(x,t)&=\dato(x,t)\,,&\ &\text{on $\partial\Om\times\R$;}\\
    \label{eq:per_periodicity}
    u_{\eps}^{\#}(x,\cdot)&\text{ is $1$-periodic,}&\ &\text{in $\Om$.}
\end{alignat}
Indeed, this problem is derived from
\eqref{eq:PDEin}--\eqref{eq:BoundData} replacing equation
\eqref{eq:InitData} with \eqref{eq:per_periodicity}.

The rigorous definition of weak solution of \eqref{eq:per_PDEboth}--\eqref{eq:per_periodicity} is standard
(see for instance \cite[Definition 4.13]{Amar:Andreucci:Gianni:2014a}

As a first step we will prove the following result.

\begin{prop}\label{p:prop6}
Under the assumptions \eqref{eq:assumpt1}--\eqref{eq:a38} and \eqref{eq:h1},
problem \eqref{eq:per_PDEboth}--\eqref{eq:per_periodicity} admits a solution $u^\#_\eps\in \spaziobbbdiesis$.
\end{prop}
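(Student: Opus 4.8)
The plan is to produce $u_\eps^\#$ as a fixed point of the period (Poincar\'e) map of the Cauchy problem \eqref{eq:PDEin}--\eqref{eq:BoundData}. For $S\in H^{1/2}(\Memb)$ let $u_\eps^S$ be the unique solution of \eqref{eq:PDEin}--\eqref{eq:BoundData} on $\Om\times(0,1)$ with $[u_\eps^S](\cdot,0)=S$; existence and uniqueness are guaranteed by \cite{Amar:Andreucci:Bisegna:Gianni:2005}, and by \cite[Lemma 4.1 and Remark 4.2]{Amar:Andreucci:Gianni:2014a} one has $u_\eps^S\in\CC^0((0,1];\X^1(\Om_\eps))$ and $[u_\eps^S]\in\CC^0([0,1];L^2(\Memb))$. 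Set $\mathcal P(S):=[u_\eps^S](\cdot,1)$. Since $\dato$ is $1$-periodic in time, uniqueness applied to the shifted solution $u_\eps^{S^\#}(\cdot,\cdot+1)$ shows that any fixed point $S^\#$ of $\mathcal P$ yields a function $u_\eps^\#:=u_\eps^{S^\#}$ with $u_\eps^\#(\cdot,t+1)=u_\eps^\#(\cdot,t)$, hence a $1$-periodic solution of \eqref{eq:per_PDEboth}--\eqref{eq:per_periodicity}, and the membership $u_\eps^\#\in\spaziobbbdiesis$ follows from the regularity on $(0,1]$ together with periodicity. It therefore remains to exhibit a fixed point of $\mathcal P$, which I would do by showing that $\mathcal P$ is a nonexpansive self-map of a suitable closed bounded convex set in $L^2(\Memb)$.

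First, $\mathcal P$ is nonexpansive on $L^2(\Memb)$. Given $S_1,S_2$, the difference $w:=u_\eps^{S_1}-u_\eps^{S_2}$ solves the homogeneous equation in $\Omint\cup\Omout$, has continuous flux across $\Memb$, vanishes on $\partial\Om$, and on $\Memb$ satisfies $\frac{\alpha}{\eps}\partial_t[w]+\big(f([u_\eps^{S_1}]/\eps)-f([u_\eps^{S_2}]/\eps)\big)=\dfbotheps\nabla w\cdot\nu_\eps$. Testing with $[w]$ and integrating by parts in the bulk (using the homogeneous equation, flux continuity and $w=0$ on $\partial\Om$) gives
\[
\frac{\alpha}{2\eps}\frac{d}{dt}\int_\Memb[w]^2\di\sigma=-\int_{\Omint\cup\Omout}\dfbotheps|\nabla w|^2-\int_\Memb\big(f([u_\eps^{S_1}]/\eps)-f([u_\eps^{S_2}]/\eps)\big)[w]\di\sigma\le 0\,,
\]
the last inequality being forced by the monotonicity \eqref{eq:a37}. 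Hence $t\mapsto\|[w](\cdot,t)\|_{L^2(\Memb)}$ is nonincreasing, so $\|\mathcal P(S_1)-\mathcal P(S_2)\|_{L^2(\Memb)}\le\|S_1-S_2\|_{L^2(\Memb)}$.

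Next I would construct an invariant ball, and this is where the weak coercivity of $f$ enters. Let $v(\cdot,t)$ be the $\dfbotheps$-harmonic extension of $\dato(\cdot,t)$ (continuous, with continuous flux, across $\Memb$, and equal to $\dato$ on $\partial\Om$); by \eqref{eq:h1} and elliptic transmission regularity, $v(\cdot,t)\in H^2(\Omint)\cap H^2(\Omout)$ with norm controlled by $\|\dato(\cdot,t)\|_{H^2(\Om)}$, so $\dfbotheps\nabla v\cdot\nu_\eps\in L^2(\Memb)$ with the same control. Setting $\tilde u:=u_\eps^S-v$, which still solves the homogeneous bulk equation with continuous flux and zero boundary data and has $[\tilde u]=[u_\eps^S]$, the computation above applied to $\tilde u$ produces an identity whose left-hand side contains $\frac{\alpha}{2\eps}\frac{d}{dt}\|[\tilde u]\|_{L^2(\Memb)}^2$, the nonlinear term $\int_\Memb f([\tilde u]/\eps)[\tilde u]\di\sigma$ and the nonnegative term $\int_{\Omint\cup\Omout}\dfbotheps|\nabla\tilde u|^2$, and whose right-hand side is $\int_\Memb(\dfbotheps\nabla v\cdot\nu_\eps)[\tilde u]\di\sigma$. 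Using \eqref{eq:a36} to bound the nonlinear term below by $\frac{\lambda_1}{\eps}\|[\tilde u]\|_{L^2(\Memb)}^2-\lambda_2|\Memb|^{1/2}\|[\tilde u]\|_{L^2(\Memb)}$, estimating the right-hand side by a constant (depending on $\eps$) times $\|\dato(\cdot,t)\|_{H^2(\Om)}\|[\tilde u]\|_{L^2(\Memb)}$, absorbing the linear terms by Young's inequality and discarding the Dirichlet integral, I would obtain
\[
\frac{d}{dt}\|[\tilde u](\cdot,t)\|_{L^2(\Memb)}^2+c_\eps\|[\tilde u](\cdot,t)\|_{L^2(\Memb)}^2\le C_\eps\big(1+\|\dato(\cdot,t)\|_{H^2(\Om)}^2\big)
\]
with $c_\eps,C_\eps>0$ independent of $t$ and $S$. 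Gronwall's lemma, together with $\dato\in L^2_{loc}(\R;H^2(\Om))$ and its $1$-periodicity (which make $\int_0^te^{-c_\eps(t-s)}\|\dato(\cdot,s)\|_{H^2(\Om)}^2\di s$ bounded uniformly in $t$), gives $\|\mathcal P(S)\|_{L^2(\Memb)}^2\le e^{-c_\eps}\|S\|_{L^2(\Memb)}^2+M_\eps$, so $\mathcal P$ maps the closed ball $\{\,S\in L^2(\Memb):\|S\|_{L^2(\Memb)}^2\le R\,\}$ into itself once $R\ge M_\eps/(1-e^{-c_\eps})$. Being a nonexpansive self-map of this closed bounded convex subset of the Hilbert space $L^2(\Memb)$, $\mathcal P$ has a fixed point $S^\#$ by the Browder--G\"ohde--Kirk theorem for nonexpansive maps; a posteriori $S^\#=\mathcal P(S^\#)\in H^{1/2}(\Memb)$ by the regularizing property, so $u_\eps^{S^\#}$ is an admissible solution and the first paragraph concludes. (Alternatively, the image under $\mathcal P$ of that ball is bounded in $H^{1/2}(\Memb)$, hence precompact in $L^2(\Memb)$, and Schauder's theorem applies.)

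I expect the invariant-ball step to be the main obstacle: because $f$ is only coercive at infinity, see \eqref{eq:a38} and \eqref{eq:a36}, and not uniformly monotone, the energy identity carries the lower-order term $-\lambda_2|[\tilde u]|$ as well as a time-dependent forcing built from $\dato$, both of which must be absorbed while retaining a genuine exponential-in-time dissipation; the periodicity and local square-integrability of $\dato$ in \eqref{eq:h1} are precisely what make the Gronwall estimate uniform in $t$. A minor technical point is the functional setting: $\mathcal P$ is naturally defined on $H^{1/2}(\Memb)$ but only $L^2(\Memb)$-nonexpansive, so one runs the fixed-point argument in $L^2(\Memb)$ — extending $\mathcal P$ by $L^2$-density if needed — and recovers the $H^{1/2}$-regularity of the fixed point afterwards from the smoothing of the Cauchy problem.
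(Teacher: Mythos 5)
Your proposal is correct, but it takes a genuinely different route from the paper. The paper does not work with a period map at all: it regularizes the nonlinearity, setting $f_\delta(s)=f(s)+\delta s$, so that $f_\delta'\geq\delta>0$ and the coercive theory of the companion paper \cite{Amar:Andreucci:Gianni:2014a} yields a unique time-periodic solution $u^\#_{\eps,\delta}$ for each $\delta>0$; it then derives $\delta$-uniform energy estimates by testing with $u^\#_{\eps,\delta}-\dato$ and with $u^\#_{\eps,\delta,t}-\dato_t$ (using \eqref{eq:a36} exactly where you use it, to absorb the noncoercive lower-order term), and passes to the limit $\delta\to0$ in the weak formulation, the nonlinear membrane term being handled by the strong $L^2(\Memb\times(0,1))$ convergence of the jump. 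Your argument instead attacks the noncoercive problem directly: monotonicity \eqref{eq:a37} makes the Poincar\'e map $\mathcal P$ nonexpansive on $L^2(\Memb)$ (this is literally the same computation the paper uses in \eqref{eq:a43} to prove decay, just repurposed), the coercivity at infinity \eqref{eq:a36} plus the lifting of $\dato$ produce an absorbing ball, and Browder--G\"ohde--Kirk closes the argument. What the paper's route buys is softness: it needs only weak compactness and the already-established coercive case, and never invokes a fixed-point theorem for merely nonexpansive maps. What your route buys is self-containedness and transparency of mechanism, at the price of two technical debts you correctly flag: (i) well-posedness of the Cauchy problem with noncoercive $f$ and merely $L^2$ initial jump (the paper's cited existence result assumes $S_\eps\in H^{1/2}(\Memb)$, so the density extension of $\mathcal P$ plus a quantitative $L^2\to H^{1/2}$ smoothing bound from \cite[Lemma 4.1]{Amar:Andreucci:Gianni:2014a} is genuinely needed, not optional); and (ii) the fixed-point step really does require uniform convexity of $L^2$ (or the compactness alternative), since nonexpansive self-maps of bounded convex sets can be fixed-point-free in general Banach spaces. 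Neither debt is fatal, and the recovery of the class $\spaziobbbdiesis$ from interior-in-time regularity plus periodicity is sound.
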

\begin{proof}
For $\delta >0$, let us denote by $f_{\delta}(s):= f(s)+\delta s$, for every $s\in\R$,
and consider the problem
\begin{alignat}2
    \label{eq:per_PDEbothdelta}
    -\Div(\dfbotheps \nabla u_{\eps,\delta}^{\#})&=0\,,&\ &\text{in $(\Omint\cup\Omout)\times\R$;}\\
    \label{eq:per_FluxContdelta}
    [\dfbotheps \nabla u_{\eps,\delta}^{\#} \cdot \nu_\eps] &= 0 \,,&\ &\text{on $\Memb\times\R$;}\\
    \label{eq:per_Circuitdelta}
    \frac{\alpha}{\eps} \pder{}{t}[u_{\eps,\delta}^{\#}] + f_\delta\left(\frac{[u_{\eps,\delta}^{\#}]}{\eps}\right)
    &=\dfbotheps \nabla u_{\eps,\delta}^{\#} \cdot \nu_\eps\,,&\quad &\text{on $\Memb\times\R$;}\\
    \label{eq:per_BoundDatadelta}
    u_{\eps,\delta}^{\#}(x,t)&=\dato(x,t)\,,&\ &\text{on $\partial\Om\times\R$;}\\
    \label{eq:per_periodicitydelta}
    u_{\eps,\delta}^{\#}(x,\cdot)&\text{ is $1$-periodic,}&\ &\text{in $\Om$.}
\end{alignat}
For any positive $\eps$ and $\delta $, the previous problem admits a unique time-periodic
solution because of the results already proved in \cite{Amar:Andreucci:Gianni:2014a}.

On the other hand, multiplying
equation \eqref{eq:per_PDEbothdelta} by $u^\#_{\eps,\delta}-\dato $, integrating by parts on
$\Om\times[ 0,1]$, using the periodicity and taking into account equations
\eqref{eq:per_FluxContdelta}--\eqref{eq:per_BoundDatadelta}, we get
\begin{equation}\label{eq:a39}
\int_0^1\int_{\Om}\frac{\dfbotheps}{2}|\nabla u_{\eps,\delta}^{\#}|^{2}\di x\di t+
\int_0^1\int_{\Memb}f_\delta\left(\frac{[ u_{\eps,\delta}^{\#}] }{\eps}\right)[ u_{\eps,\delta }^{\#}] \di\sigma \di t
\leq \int_0^1\int_{\Om}\frac{\dfbotheps}{2}| \nabla \dato |^{2}\di x\di t\,.
\end{equation}
Finally, using \eqref{eq:a36} we obtain
\begin{equation}\label{eq:a40}
\int_0^1\!\!\int_{\Om}\frac{\dfbotheps}{2}|\nabla u_{\eps,\delta}^{\#}|^{2}\di x\di t
+\int_0^1\!\!\int_{\Memb}\frac{\lambda_1}{2\eps}[ u_{\eps,\delta}^{\#}]^{2}\di\sigma \di t
\leq \int_0^1\!\!\int_{\Om}\frac{\dfbotheps}{2}|\nabla \dato|^{2}\di x\di t
+\frac{\eps}{2\lambda_1}\lambda_2^{2}|\Memb|\,.
\end{equation}
Multiplying now equation \eqref{eq:per_PDEbothdelta} by $u^{\#}_{\eps,\delta,t }-\dato_t$,
integrating by parts on $\Om\times [0,1]$, using the periodicity and taking into account
equations \eqref{eq:per_FluxContdelta}--\eqref{eq:per_BoundDatadelta}, we get
\begin{multline}\label{eq:a41}
\frac{\alpha }{\eps}\int_0^1\!\!\int_{\Memb}[ u_{\eps,\delta,t}^{\#}]^{2}\di\sigma \di t+
\int_0^1\!\!\int_{\Memb}f_{\delta }\left(\frac{[u_{\eps,\delta }^{\#}]}{\eps}\right)
[u_{\eps,\delta,t}^{\#}] \di \sigma\di t
\\
\leq \int_0^1\!\!\int_\Om \dfbotheps\nabla u_{\eps,\delta}^{\#}\nabla \dato_t\di x\di t
\leq \int_0^1\!\!\int_\Om \frac{\dfbotheps}{2}|\nabla u_{\eps,\delta}^{\#}|^2\di x\di t
+\int_0^1\!\!\int_\Om \frac{\dfbotheps}{2}|\nabla \dato_t|^2\di x\di t
\\
\leq \int_0^1\!\!\int_{\Om}\frac{\dfbotheps}{2}|\nabla \dato|^{2}\di x\di t
+\frac{\eps}{2\lambda_1}\lambda_2^{2}|\Memb|
+\int_0^1\!\!\int_\Om \frac{\dfbotheps}{2}|\nabla \dato_t|^2\di x\di t\,,
\end{multline}
where we used \eqref{eq:a40}. Notice that the second integral on the left-hand side is equal to
zero by periodicity and trivial integration.
Hence
\begin{equation}\label{eq:a42}
\frac{\alpha }{2\eps}\int_0^1\int_{\Memb}[ u_{\eps,\delta,t}^{\#}]^{2}\di\sigma \di t
\leq \int_0^1\int_{\Om}\frac{\dfbotheps}{2}| \nabla \dato_{t}|^{2}\di x\di t
+\int_0^1\int_{\Om}\frac{\dfbotheps}{2}| \nabla \dato |^{2}\di x\di t
+\frac{\eps}{2\lambda_1}\lambda_2^{2}|\Memb|\,.
\end{equation}
Inequalities \eqref{eq:a40} and \eqref{eq:a42},
for $\eps>0$ fixed,
yield the weak convergence of $u_{\eps,\delta}^{\#}$ and $\nabla u_{\eps,\delta}^{\#}$ in $L^{2}(\Om^\eps_i\times(0,1))$, $i=1,2$,
and respectively the strong convergence of $[ u_{\eps,\delta}^{\#}]$ in $L^{2}(\Memb\times(0,1))$,
for $\delta\to 0$.
Since all the functions $u_{\eps,\delta}^{\#}$ are $1$-periodic,
denoting as usual with $u_{\eps}^{\#}$ the limit of $u_{\eps,\delta}^{\#}$ we have that the
same periodicity holds true for $u_{\eps}^{\#}$.
Moreover we can pass to the limit, as $\delta
\rightarrow 0$, in the weak formulation of problem \eqref{eq:per_PDEbothdelta}--\eqref{eq:per_periodicitydelta},
thus obtaining that $u_{\eps}^{\#}$ is a
$1$-periodic solution of problem \eqref{eq:per_PDEboth}--\eqref{eq:per_periodicity},
under the assumptions \eqref{eq:assumpt1}--\eqref{eq:a38} and \eqref{eq:h1}.

Differentiating formally with respect to $t$ \eqref{eq:per_PDEbothdelta}--\eqref{eq:per_BoundDatadelta},
multiplying the first equation thus obtained by $(u_{\eps,\delta,t}^{\#}-\Psi_t)$ and finally integrating by parts, we obtain
\begin{equation*}
\int_0^1\int_\Om \frac{\dfboth^\eps}{2}|\nabla u_{\eps,\delta,t}^{\#}|^2\di x\di t\leq
\int_0^1\int_\Om \frac{\dfboth^\eps}{2}|\nabla \Psi_{t}|^2\di x\di t\,.
\end{equation*}

Since the estimates above are uniform in $\delta$, we have that $u^\#_\eps$ belongs to the class claimed in
the statement.
\end{proof}

%\begin{remark}\label{r:rem1}
%The uniformity of the above estimates is a result similar to the one obtained in
%Lemma 3.6 of \cite{Amar:Andreucci:Gianni:2014a}.
%\end{remark}

Given $\eps>0$, it remains to prove the asymptotic convergence of the solution $u_\eps$ of \eqref{eq:PDEin}--\eqref{eq:BoundData}
to\ $u_\eps^{\#}$, for $t\to +\infty$.

\begin{thm}\label{t:t5}
Let $\eps>0$ be fixed and let $u_\eps$ be the solution of problem
\eqref{eq:PDEin}--\eqref{eq:BoundData}. Then, for $t\to +\infty$, $u_\eps\to u^{\#}_\eps$
in the following sense:
\begin{alignat}1
\label{eq:decayperiodic_new}
\qquad & \lim_{t\to+\infty}\| u_\eps(\cdot,t)-u^{\#}_\eps(\cdot,t)\| _{L^{2}(\Om)}=0\,;
\\
\label{eq:decayperiodic1_new}
\qquad & \lim_{t\to+\infty}\| \nabla u_\eps(\cdot,t)-
\nabla u^{\#}_\eps(\cdot,t)\| _{L^{2}(\Om)}=0\,;
\\
\label{eq:decayperiodic3_new}
\qquad & \lim_{t\to+\infty}\|[u_\eps](\cdot,t)-[u^{\#}_\eps](\cdot,t)\|_{L^{2}(\Memb)}=0\,.
\end{alignat}
\end{thm}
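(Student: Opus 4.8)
The plan is to study the difference $w_\eps := u_\eps - u^{\#}_\eps$ and to show it decays to zero by means of an energy (Liapunov-type) argument, exactly in the spirit announced in the introduction for the noncoercive case. Observe that $w_\eps$ solves the same elliptic equations \eqref{eq:PDEin}, \eqref{eq:FluxCont} in $\Omint\cup\Omout$ (with $[\dfbotheps\nabla w_\eps\cdot\nu_\eps]=0$ on $\Memb$), has homogeneous boundary data $w_\eps=0$ on $\partial\Om\times(0,T)$ since $u_\eps$ and $u^{\#}_\eps$ share the same Dirichlet datum $\dato$, and on the membrane satisfies the difference of the two circuit equations,
\begin{equation*}
\frac{\alpha}{\eps}\pder{}{t}[w_\eps] + f\!\left(\frac{[u_\eps]}{\eps}\right) - f\!\left(\frac{[u^{\#}_\eps]}{\eps}\right) = \dfbotheps\nabla w_\eps\cdot\nu_\eps \qquad\text{on }\Memb\times(0,T)\,.
\end{equation*}
First I would test this system with $w_\eps$ itself (i.e. multiply \eqref{eq:PDEin} for $w_\eps$ by $w_\eps$ and integrate by parts over $\Om$), which, using $w_\eps=0$ on $\partial\Om$ and the flux continuity, yields
\begin{equation*}
\frac{\alpha}{2\eps}\der{}{t}\int_{\Memb}[w_\eps]^2\di\sigma + \int_{\Om}\dfbotheps|\nabla w_\eps|^2\di x + \int_{\Memb}\left(f\!\left(\tfrac{[u_\eps]}{\eps}\right)-f\!\left(\tfrac{[u^{\#}_\eps]}{\eps}\right)\right)\frac{[w_\eps]}{\eps}\di\sigma = 0\,.
\end{equation*}
By the strict monotonicity \eqref{eq:a37} of $f$, the membrane term is nonnegative; hence $t\mapsto \int_{\Memb}[w_\eps]^2\di\sigma$ is nonincreasing, and in particular bounded, and moreover $\int_0^{+\infty}\!\!\int_{\Om}\dfbotheps|\nabla w_\eps|^2$ and $\int_0^{+\infty}\!\!\int_{\Memb}(f(\tfrac{[u_\eps]}{\eps})-f(\tfrac{[u^{\#}_\eps]}{\eps}))[w_\eps]\,\di\sigma\,\di t$ are finite.

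Next I would upgrade these integrability facts to genuine limits. Differentiating the $w_\eps$-system formally in $t$ and testing with $[w_{\eps,t}]$ (this is the standard regularization-and-passage-to-the-limit argument already invoked in Proposition~\ref{p:prop6} and available from \cite{Amar:Andreucci:Gianni:2014a}) gives a bound of the form
\begin{equation*}
\frac{\alpha}{2\eps}\der{}{t}\int_{\Memb}[w_{\eps,t}]^2\di\sigma + \int_{\Om}\dfbotheps|\nabla w_{\eps,t}|^2\di x + \frac1\eps\int_{\Memb} f'\!\left(\tfrac{[\xi_\eps]}{\eps}\right)[w_{\eps,t}]^2\di\sigma \le 0
\end{equation*}
for an intermediate value $\xi_\eps$; again by monotonicity the membrane term is $\ge0$, so $\int_{\Memb}[w_{\eps,t}]^2\di\sigma$ is nonincreasing and $\int_0^{+\infty}\!\!\int_{\Om}\dfbotheps|\nabla w_{\eps,t}|^2$ is finite. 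Combining the two energy identities, $\frac{d}{dt}\int_{\Om}\dfbotheps|\nabla w_\eps|^2$ is controlled (it equals, up to sign, a term involving $\nabla w_\eps$ and $\nabla w_{\eps,t}$), so $t\mapsto\int_{\Om}\dfbotheps|\nabla w_\eps|^2$ is uniformly continuous with finite integral over $(0,+\infty)$, whence it tends to $0$; this proves \eqref{eq:decayperiodic1_new}. Then Poincaré's inequality on $\Om$, using $w_\eps=0$ on $\partial\Om$ (applied phase by phase together with the control on $\|[w_\eps]\|_{L^2(\Memb)}$, since $w_\eps\in\X^1(\Om_\eps)$), gives $\|w_\eps(\cdot,t)\|_{L^2(\Om)}\to 0$, i.e. \eqref{eq:decayperiodic_new}.

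The main obstacle is \eqref{eq:decayperiodic3_new}: monotonicity alone makes $\|[w_\eps](\cdot,t)\|_{L^2(\Memb)}$ nonincreasing, hence convergent to some limit $\ell\ge0$, but does not immediately force $\ell=0$ — this is precisely where noncoercivity bites, since without a lower bound $f'\ge\kappa>0$ one cannot absorb $\|[w_\eps]\|^2_{L^2(\Memb)}$ into the dissipation. Here I would exploit \eqref{eq:a38} (or, in the more general setting, uniform continuity of $f^{-1}$): on the set where $|[u_\eps]/\eps|$ and $|[u^{\#}_\eps]/\eps|$ are large the monotone difference $f(\tfrac{[u_\eps]}{\eps})-f(\tfrac{[u^{\#}_\eps]}{\eps})$ controls $[w_\eps]/\eps$ from below by $\delta_0$, so the finiteness of $\int_0^{+\infty}\!\!\int_{\Memb}(f(\cdot)-f(\cdot))[w_\eps]\,\di\sigma\,\di t$ yields $\int_0^{+\infty}\!\!\int_{\Memb\cap\{\text{large}\}}[w_\eps]^2\,\di\sigma\,\di t<+\infty$; on the complementary set where both arguments stay bounded, one uses that $\|\nabla w_\eps\|_{L^2(\Om)}\to 0$ together with a trace inequality to see that the $L^2(\Memb)$-norm of $[w_\eps]$ restricted there also tends to $0$ — more carefully, I would argue by contradiction: if $\ell>0$, then along a time sequence $t_n\to+\infty$ the functions $w_\eps(\cdot,t_n)$, bounded in $\X^1(\Om_\eps)$ with vanishing gradient, converge (after extracting) to an element of $\X^1(\Om_\eps)$ with zero gradient and zero boundary trace, hence $w_\eps\to 0$ also in $L^2(\Memb)$-trace along the sequence, contradicting $\ell>0$. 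Assembling these cases gives $\ell=0$ and hence \eqref{eq:decayperiodic3_new}, completing the proof.
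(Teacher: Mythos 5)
Your opening energy identity (testing the difference system with $w_\eps$) coincides with the paper's \eqref{eq:a43}, but the two steps that carry the real weight of the theorem both have genuine gaps. The most serious one is your argument for \eqref{eq:decayperiodic3_new}. Your contradiction via compactness fails: a weak limit $v\in\X^1(\Om_\eps)$ with $\nabla v=0$ in $\Omint\cup\Omout$ and $v=0$ on $\partial\Om$ is \emph{not} forced to vanish on $\Omint$, because the inclusions do not touch $\partial\Om$; $v$ can equal an arbitrary constant on each connected component of $\Omint$, so $[v]\neq0$ on $\Memb$ and $\ell>0$ is not contradicted. Your alternative split into ``large arguments'' / ``bounded arguments'' only produces $\int_0^\infty\!\int_{\Memb\cap\{\text{large}\}}[w_\eps]^2\di\sigma\di t<\infty$, which is a space--time integrability statement and does not give decay of $\|[w_\eps](\cdot,t)\|_{L^2(\Memb)}$ at every large $t$; and the trace inequality on the complementary (time-dependent) subset of $\Memb$ involves the $L^2(\Om)$-norm of $w_\eps$, which you have not yet controlled. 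The mechanism the paper uses is quantitative: if the nonincreasing Liapunov function $\frac{\alpha}{\eps}\int_{\Memb}[r_\eps]^2\di\sigma$ had a positive limit $\overline r_\eps$, then at \emph{every} time at least half of it is carried by the set where $[r_\eps]^2\geq\overline r_\eps\eps/(2\alpha|\Memb|)$, and on that set the difference quotient $g_\eps$ of $f$ is bounded below by a constant $\chi>0$ (this is exactly where \eqref{eq:a37}--\eqref{eq:a38} enter: strict monotonicity on compacts plus $f'\geq\delta_0$ at infinity give a uniform lower bound on $(f(a)-f(b))/(a-b)$ whenever $|a-b|$ is bounded below); hence the Liapunov function decreases at the uniform rate $\chi\overline r_\eps/(2\alpha)$, a contradiction. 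You need to supply this step; nothing in your proposal replaces it.

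The second gap concerns \eqref{eq:decayperiodic1_new}. Differentiating the membrane condition in $t$ gives a term of the form $\bigl(f'(a)a_t-f'(b)b_t\bigr)(a_t-b_t)$ with $a=[u_\eps]/\eps$, $b=[u^{\#}_\eps]/\eps$; this is \emph{not} $f'(\xi)\,[w_{\eps,t}]^2$ for an intermediate $\xi$ (the mean value theorem applies to the spatial difference at fixed time, not to the time derivative of $f(a)-f(b)$), and as a quadratic form in $(a_t,b_t)$ it has determinant $-\tfrac14(f'(a)-f'(b))^2\leq0$, so it is indefinite. Consequently your bound $\int_0^{+\infty}\!\int_\Om\dfbotheps|\nabla w_{\eps,t}|^2<\infty$ is not established, and the uniform-continuity argument (which additionally would need pointwise-in-time bounds on $\|\nabla w_\eps\|_{L^2}$ and $\|\nabla w_{\eps,t}\|_{L^2}$ that you do not have) collapses. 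The paper avoids differentiating the equation: it tests the \emph{undifferentiated} system with $r_{\eps,t}$, absorbs the resulting $\frac{\alpha}{\eps}\int_{\Memb}[r_{\eps,t}]^2$ term by Cauchy--Schwarz to get $\frac{d}{dt}\int_\Om\dfbotheps|\nabla r_\eps|^2\leq\int_{\Memb}\frac{g_\eps^2}{\alpha\eps}[r_\eps]^2\di\sigma$, and combines this one-sided differential inequality with the integrability in time of $\int_\Om\dfbotheps|\nabla r_\eps|^2$ (which furnishes good times $t_n$ in each unit interval where this quantity is small) and with the already-proved decay of $\|[r_\eps]\|_{L^2(\Memb)}$. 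Note also that your proof of \eqref{eq:decayperiodic_new} via Poincar\'e on $\Omint$ requires \eqref{eq:decayperiodic3_new} as an input, so the logical order must be: jump decay first, then gradient decay, then $L^2(\Om)$ decay.
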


\begin{proof}
Setting $r_\eps:=u_\eps^{\#}-u_\eps$, we obtain that $r_\eps$ satisfies
\begin{alignat}2
  \label{eq:PDEboth1r}
  -\Div(\dfbotheps\nabla r_{\eps})&=0\,,&\qquad &\!\!\!\!\!\!\!\!\!\!\!\!\!\!\!\!\!\!
  \text{in $(\Omint\cup\Omout)\times(0,+\infty)$;}\\
  \label{eq:FluxCont1r}
  [\dfbotheps\nabla r_\eps\cdot \nu_{\eps}] &= 0
  \,,&\qquad &\text{on   $\Memb\times (0,+\infty)$;}\\
  \label{eq:Circuit1r}
   \frac{\alpha}{\eps}\frac{\partial}{\partial t}[r_{\eps}] +
   g_\eps(x,t)\frac{[r_{\eps}]}{\eps}&=\dfbotheps \nabla r_{\eps} \cdot \nu_{\eps}\,,&\qquad
  &\text{on $\Memb\times(0,+\infty)$;}\\
  \label{eq:InitData1r}
  [r_{\eps}](x,0)& =[u_\eps^{\#}(x,0)]-S_{\eps}(x)=:\widehat{S_\eps}(x) \,,&\qquad
  &\text{on $\Memb$;}\\
  \label{eq:BoundData1r}
  r_{\eps}(x)&=0 \,,&\qquad&\text{on $\partial\Om\times(0,+\infty)$;}
\end{alignat}
where
\begin{equation*}
    g_\eps(x,t):=
    \left\{
\begin{aligned}
    & f^\prime\left(\frac{[ u_\eps]}{\eps} (x,t)\right)
 \qquad & \text{if $ [ u_\eps](x,t)=[ u_\eps^\#](x,t)$,}
     \\
  &  \frac{f\left(\frac{[ u_\eps^{\#}]}{\eps }(x,t)\right)
      -f\left(\frac{[ u_\eps]}{\eps} (x,t)\right)}{\frac{[ u_\eps^{\#}]}{\eps} (x,t)
      -\frac{[ u_\eps]}{\eps}(x,t)}
      \qquad &  \text{if $ [ u_\eps](x,t)\not=[ u_\eps^\#](x,t)$,}
    \end{aligned}\right.
  \end{equation*}
so that $g_\eps(x,t)\geq 0$, and $\widehat{S_\eps}(x)$ still
satisfies assumption $ii)$ in \eqref{eq:assumpt2} because of \eqref{eq:a40} and \eqref{eq:a42}.
Multiplying equation \eqref{eq:PDEboth1r} by $r_\eps $ and integrating by parts we have
\begin{equation}\label{eq:a43}
\int_{\Om}\dfbotheps| \nabla r_\eps|^{2}\di x+
\frac{\alpha }{\eps}\int_{\Memb}[r_{\eps,t}][ r_{\eps}] \di\sigma
+\int_{\Memb}\frac{g_\eps(x,t)}{\eps}[r_\eps]^{2}\di\sigma =0\,.
\end{equation}
Equation \eqref{eq:a43} implies that the function $t\mapsto
\frac{\alpha }{\epsilon }\int_{\Memb}[r_\eps(x,t)]^{2}\di\sigma $
is a positive, decreasing function of $t$; hence, it
tends to a limit value $\overline r_\eps\geq 0$ as $t\to +\infty $.
We claim that the value $\overline r_\eps$ must be zero. Otherwise,
for every $t>0$, $\frac{\alpha }{\epsilon }\int_{\Memb}[r_\eps(x,t)]^{2}\di\sigma\geq
\overline r_\eps>0$. On the other hand, setting
$\Memb_{\overline r_\eps}(t):=\{x\in \Memb :[r_\eps(x,t)] ^{2}\leq \frac{\overline r_\eps\eps}{2\alpha|\Memb|}\}$,
we have that
\begin{equation}\label{eq:a44}
\frac{\alpha }{\eps }\int_{\Memb\setminus\Memb_{\overline r_\eps}(t)}[r_\eps(x,t)]^{2}\di\sigma
\geq \frac{\overline r_\eps}{2}\,,\qquad \forall t>0\,.
\end{equation}
Indeed, by definition,
\begin{equation*}
\begin{aligned}
\overline r_\eps\ & \leq \frac{\alpha}{\eps}\int_{\Memb} [r_\eps(x,t)]^2\di \sigma
= \frac{\alpha}{\eps}\int_{\Memb\setminus\Memb_{\overline r_\eps}(t)}[r_\eps(x,t)]^2\di\sigma+
\frac{\alpha}{\eps}\int_{\Memb_{\overline r_\eps}(t)}[r_\eps(x,t)]^2\di\sigma
\\
& \leq \frac{\alpha}{\eps}\int_{\Memb\setminus\Memb_{\overline r_\eps}(t)}[r_\eps(x,t)]^2\di\sigma+
\frac{\alpha}{\eps}\,\frac{\overline r_\eps\eps}{2\alpha|\Memb|}\,|\Memb_{\overline r_\eps}|
\leq \frac{\alpha}{\eps}\int_{\Memb\setminus\Memb_{\overline r_\eps}(t)}[r_\eps(x,t)]^2\di\sigma+
\frac{\overline r_\eps}{2}\,,
\end{aligned}
\end{equation*}
which implies \eqref{eq:a44}.
Moreover,  we have that, on $\Memb\setminus\Memb_{\overline r_\eps}(t)$,
$g_\eps(x,t)\geq \chi> 0$, where $\chi $
is a suitable positive constant depending only on  $(\overline r_\eps,\eps,\alpha ,|\Memb|)$ (this last result
follows from assumption \eqref{eq:a37}--\eqref{eq:a38}).
Hence, using \eqref{eq:a43}, it follows
\begin{multline}\label{eq:a45}
\frac{d}{dt}\left( \frac{\alpha }{2\eps}\int_{\Memb}[r_\eps(x,t)]^{2}\di\sigma \right)
\leq -\!\!\!\!\int_{\Memb\setminus\Memb_{\overline r_\eps}(t)}\frac{g_\eps(x,t)}{\eps}[r_\eps(x,t)]^{2}\di\sigma
\\
\leq -\chi \int_{\Memb\setminus\Memb_{\overline r_\eps}(t)}\frac{1}{\eps}
[r_\eps(x,t)]^{2}\di\sigma
\leq -\frac{\overline r_\eps}{2\alpha }\chi<0\,.
\end{multline}
Inequality \eqref{eq:a45} clearly contradicts the asymptotic convergence in $t$ of the function
$t\mapsto\frac{\alpha }{\eps }\int_{\Memb}[r_\eps(x,t)]^{2}\di\sigma$, hence
\begin{equation}\label{eq:a46}
\lim_{t\to +\infty }\frac{\alpha }{\eps}\int_{\Memb}[r_\eps(x,t)]^{2}\di\sigma =0\,.
\end{equation}
In particular, this gives \eqref{eq:decayperiodic3_new}.
Integrating \eqref{eq:a43} in $[\hat t,\infty )$ and taking into account \eqref{eq:a46}, we get
\begin{equation}\label{eq:a47}
\int_{\hat t}^{+\infty }\int_{\Om}\dfbotheps|\nabla r_\eps|^{2}\di x\di t\leq
\frac{\alpha }{2\eps}\int_{\Memb}[r_\eps(x,\hat t)]^{2}\di\sigma\,,
\end{equation}
which implies
\begin{equation}\label{eq:a48}
\lim_{t\to +\infty }\int_{\hat t}^{+\infty }\int_{\Om}\dfbotheps|\nabla r_\eps|^{2}\di x\di t=0\,.
\end{equation}
Condition \eqref{eq:a48} guarantees that for every positive $\eta$ there exists a $\hat{t}(\eta)>0$,
such that
\begin{equation*}
\int_{\hat{t}(\eta)}^{+\infty}\int_{\Om}\dfbotheps| \nabla r_\eps|^{2}\di x\di t
\leq \eta\,,
\end{equation*}
which, in turn implies that, for every natural number $n$, there exists a
$t_{n}\in (\hat{t}(\eta)+n,\hat{t}(\eta)+(n+1))$, such that
\begin{equation}\label{eq:a49}
\int_{\Om}\dfbotheps|\nabla r_\eps(x,t_n)|^{2}\di x\leq \eta\,.
\end{equation}
Now, we multiply \eqref{eq:PDEboth1r} by $r_{\eps,t}$ and integrate in $\Om$, so that
\begin{equation}\label{eq:a50}
\int_{\Om}\dfbotheps\nabla r_\eps\nabla r_{\eps,t}(x,t)\di x
+\frac{\alpha }{\eps}\int_{\Memb}[ r_{\eps,t}(x,t)]^{2}\di\sigma
+\int_{\Memb}\frac{g_\eps(x,t)}{\eps}[r_\eps(x,t)]\,[r_{\eps,t}(x,t)] \di\sigma =0\,,
\end{equation}
which implies
\begin{equation}\label{eq:a51}
\int_{\Om}\dfbotheps\nabla r_\eps\nabla r_{\eps,t}\di x
\leq \int_{\Memb}\frac{g_\eps^{2}(x,t)}{2\alpha \eps}[r_\eps]^{2}\di\sigma\,.
\end{equation}
Moreover, integrating \eqref{eq:a51} in $[t_n,t^*]$ with $t^*\in [t_{n},t_{n}+2]$ and
using \eqref{eq:a49}, we have
\begin{equation*}
\sup_{t\in [t_{n},t_{n}+2]}\left( \int_{\Om}\frac{\dfbotheps}{2}| \nabla r_\eps(x,t)|^{2}\di x\right)
\leq \frac{\eta}{2}+\frac{L^{2}}{\alpha^{2}}\sup_{t\in [t_{n},+\infty )}
\left( \frac{\alpha }{\eps}\int_{\Memb}[r_\eps(x,t)]^{2}\di\sigma \right)\,,\quad
\forall n\in\NN\,.
\end{equation*}
Since $t_{n+1}-t_n<2$, the intervals of the form $[t_n,t_n+2]$, when $n$ varies in $\NN$,
are overlapping; hence, we obtain
\begin{equation}\label{eq:a52}
\sup_{t\in [\hat{t}+1,+\infty )}\left( \int_{\Om}\frac{\dfbotheps}{2}| \nabla r_\eps(x,t)|^{2}\di x\right)
\leq \frac{\eta}{2}+\frac{L^{2}}{\alpha^{2}}\sup_{t\in [\hat{t},+\infty)}
\left( \frac{\alpha }{\eps}\int_{\Memb}[r_\eps(x,t)]^{2}\di\sigma \right)\,.
\end{equation}
Because of \eqref{eq:a46} the integral in the right-hand side of \eqref{eq:a52} can be made
smaller than $\frac{\eta}{2}\left( \frac{L^{2}}{\alpha^{2}}\right)^{-1}$,
provided $\hat{t}$ is chosen sufficiently large in
dependence of $\eta$. This means that
\begin{equation}\label{eq:a53}
\sup_{t\in [\hat{t}+1,+\infty )}\left( \int_{\Om}\frac{\dfbotheps}{2}|\nabla r_\eps(x,t)|^{2}\di x\right)
\leq \eta\,,
\end{equation}
so that
\begin{equation}\label{eq:a54}
\lim_{t\to +\infty }\int_{\Om}\dfbotheps|\nabla r_\eps(x,t)|^{2}\di x=0\,.
\end{equation}
In particular, this gives \eqref{eq:decayperiodic1_new}.
Finally, Poincare's inequality together with \eqref{eq:a46} and \eqref{eq:a54} yield
\begin{equation}\label{eq:a55}
\lim_{t\to +\infty }\int_{\Om}|r_\eps(x,t)|^{2}\di x=0\,,
\end{equation}
which gives \eqref{eq:decayperiodic_new}.
\end{proof}

\begin{remark}\label{r:rem10tris}
More in general, the previous procedure allows us to prove that solutions of \eqref{eq:PDEin}--\eqref{eq:BoundData}
having different initial data satisfying \eqref{eq:assumpt2} but the same boundary condition tend asymptotically one to the other (such convergence  being exponential if $f$ is coercive in the sense of \eqref{eq:intro1}).
\end{remark}

\begin{remark}\label{r:rem10bis}
Observe that, thanks to previous remark, Theorem \ref{t:t5} implies uniqueness of the
periodic solution of problem \eqref{eq:per_PDEboth}--\eqref{eq:per_periodicity} in ${\spaziobbbdiesis}$.
\end{remark}

\section{Asymptotic decay of the solution of the homogenized problem}\label{s:asymptotic_hom}

The aim of this section is to prove asymptotic decay of the solution of the homogenized problem.
To this purpose,
let $(u,u^1)\!\in\! L^2\!\big(\!(0,T);$ $H^1(\Om)\big)\times L^2\big(\Om\times(0,T);{\X}^1_\#(Y)\big)$ be
the two-scale limit of the solution $u_\eps$ of problem \eqref{eq:PDEin}--\eqref{eq:BoundData},
where the initial data $S_\eps$ satisfies the
additional condition that
$S_{\eps}/\eps$ two-scale converges in $L^2\big(\Om;L^2(\Permemb)\big)$
 to a function $S_{1}$ such that $S_1(x,\cdot)=S_{\mid\Permemb}(x,\cdot)$
 for some $S\in{\mathcal C}\big(\overline\Om;{\mathcal C}^1_\#(Y)\big)$, and
\begin{equation}
\label{eq:init_asym1}
\lim_{\eps\to 0}\eps\int_{\Memb} \left(\frac{S_\eps}{\eps}\right)^2\!\!\!(x)\di\sigma
=\int_\Om\int_{\Permemb} S_1^2(x,y)\di x\di\sigma\,.
\end{equation}
We recall that, under these assumptions, by
\cite[Theorem 2.1]{Amar:Andreucci:Bisegna:Gianni:2013}, the pair $(u,u^1)$ is the weak solution
of the two-scale problem
 \begin{alignat}2
  \label{eq:PDE_limit}
  -\Div\left(\dfav \nabla u+\int_Y\dfboth\nabla_yu^{1}\di y\right)&=0\,,&\qquad
  &\text{in $\Om\times(0,T)$;}\\
  \label{eq:PDEper_limit}
  -\Div_y(\dfboth\nabla u+\dfboth \nabla_y u^{1})&=0\,,&\qquad &\!\!\!\!\!\!\!\!\!\!\!\!\!\!\!\!\!\!\!
  \text{in $\Om\times(\Perint\cup\Perout)\times(0,T)$;}\\
  \label{eq:FluxCont_limit}
  [\dfboth (\nabla u+\nabla_y u^{1}) \cdot \nu] &=0\,,&\qquad
  &\text{on $\Om\times\Permemb\times(0,T)$;}\\
  \label{eq:Circuit_limit}
   {\alpha}\frac{\partial}{\partial t}[u^{1}] +
   f\left({[u^{1}]}\right) &=\dfboth (\nabla u+\nabla_y u^{1}) \cdot \nu\,,&\qquad
  &\text{on $\Om\times\Permemb\times(0,T)$;}\\
  \label{eq:InitData_limit}
  [u^{1}](x,y,0)& =S_{1}(x,y) \,,&\qquad
  &\text{on $\Om\times\Permemb$;}\\
  \label{eq:BoundData_limit}
  u(x,t)&=\dato(x,t)\,,&\qquad&\text{on $\partial\Om\times(0,T)$;}
\end{alignat}
in the sense of the following definition.

\begin{defin}\label{d:def1}
A pair $(u,u^1)\!\in\!
  L^2\!\big(\!(0,T);$ $H^1(\Om)\big)\times
  L^2\big(\Om\times(0,T);{\X}^1_\#(Y)\big)$ is a weak solution of
  \eqref{eq:PDE_limit}--\eqref{eq:BoundData_limit} if
%
%Note that the variational formulation of problem \eqref{eq:PDE_limit}--\eqref{eq:BoundData_limit} is
%the following:
\begin{multline}\label{eq:a76}
\int_{0}^{T}\int_{\Om }\int_{Y}\dfboth \left( \nabla u+\nabla_{y}u^{1}\right)
\left(\nabla \phi +\nabla _{y}\Phi \right)\di x\di y\di t+
\int_{0}^{T}\int_{\Om}\int_{\Permemb}f([u^{1}])[\Phi]\di x\di\sigma\di t
\\
-\alpha \int_{0}^{T}\int_{\Om}\int_{\Permemb}[u^{1}]\frac{\partial }{\partial t}[\Phi ]\di x\di\sigma\di t
-\alpha \int_{\Om}\int_{\Permemb}[\Phi ]S_{1}\di x\di\sigma=0\,,
\end{multline}
for any function $\phi\in \CC^0\big(0,T;H^1_0(\Om)\big)$
and any function $\Phi\in \CC^0\big([0,T];L^2(\Om;{\X}^1_\#(Y))\big) $
  with $[\Phi_t]\in \CC^0\big([0,T];L^2(\Om\times\Permemb)\big)$
  which vanishes at $t=T$.

Moreover, $u$ satisfies the boundary condition on $\partial\Om\times[0,T]$
in the trace sense (i.e. $u(x,t)=\dato (x,t)$ a.e. on $\partial\Om\times(0,T)$) and $u^{1}$ is periodic in $Y$
and has zero mean value in $Y$ for a.e. $(x,t)\in\Om\times(0,T)$
(see \cite[Definition 5.1]{Amar:Andreucci:Gianni:2014a}).
\end{defin}
\bigskip

For later use, let us define
\begin{multline}\label{eq:a86}
\normaa{\big(h(\cdot,t),h^1(\cdot,t)\big)\big)}
:= \|h\|_{\CC^0([0,1];H^1(\Om))}
+\|h^{1}\|_{\spaziob}
\\
+\| \nabla_{y}h^{1}\|_{\spaziob}+\Vert [ h^1]\Vert_{\spazioc} \,,
\end{multline}
where $(h,h^1)\in \spazioaaa$, and
\begin{equation}\label{eq:a85}
\normab{(\widetilde h,\widetilde h^1)}
:= \Vert \widetilde h\Vert_{H^1(\Om)} + \Vert \widetilde h^1\Vert_{L^2(\Om\times Y) }+
\Vert \nabla_y \widetilde h^1\Vert_{L^2(\Om\times Y)}+\Vert [\widetilde h^1]\Vert_{L^2(\Om\times \Permemb)} \,,
\end{equation}
where $(\widetilde h,\widetilde h^1)\in \spazioee$.

As in the previous section, first we prove that there exists a time-periodic weak solution of the two-scale problem
\begin{alignat}2
  \label{eq:PDE_limit_per}
  -\Div\left(\dfav \nabla u^{\#}+\int_Y\dfboth\nabla_yu^{1,\#}\di y\right)&=0\,,&
  &\text{in $\Om\times\R$;}\\
  \label{eq:PDEper_limit_per}
  -\Div_y(\dfboth\nabla u^{\#}+\dfboth \nabla_y u^{1,\#})&=0\,,&
  &\!\!\!\!\!\!\!\!\!\!\!\!\!\!\!\!\!\!\!\text{in $\Om\times(\Perint\cup\Perout)\times\R$;}\\
  \label{eq:FluxCont_limit_per}
  [\dfboth (\nabla u^{\#}+\nabla_y u^{1,\#}) \cdot \nu] &=0\,,&
  &\text{on $\Om\times\Permemb\times\R$;}\\
  \label{eq:Circuit_limit_per}
   {\alpha}\frac{\partial}{\partial t}[u^{1,\#}] +
   f\left({[u^{1,\#}]}\right) &=\dfboth (\nabla u^{\#}+\nabla_y u^{1,\#}) \cdot \nu\,,&
  &\text{on $\Om\times\Permemb\times\R$;}\\
  \label{eq:InitData_limit_per}
  [u^{1,\#}](x,y,\cdot)& \text{ is $1$-periodic,}&   &\text{on $\Om\times\Permemb$;}\\
  \label{eq:BoundData_limit_per}
  u^{\#}(x,t)&=\dato(x,t)\,,& &\text{on $\partial\Om\times\R$;}
\end{alignat}
in the sense of the following definition.

\begin{defin}\label{d:def2}
A pair $(\vi^{\#},\vi^{1,\#})\in\spaziogperaa$ with
$[\vi^{1,\#}_t]\in L^2_\#\big(0,1;L^2(\Om\times\Permemb)\big)$
is a time-periodic weak solution (with period $1$) of \eqref{eq:PDE_limit_per}--\eqref{eq:BoundData_limit_per} if
% $(\vi^{\#},\vi^{1,\#})$ are $1$-periodic in time and
  \begin{multline}\label{eq:a35_nuova}
    \int_{\R}\int_{\Om}\int_{Y}\dfboth
    \left( \nabla \vi^{\#}(x,t)+\nabla _{y}\vi^{1,\#}(x,y,t)\right)
    \left( \nabla \phi (x,t)+\nabla_{y}\Phi (x,y,t)\right) \di x\di y\di t
    \\
    +\int_{\R}\int_{\Om}\int_{\Permemb}
    f([\vi^{1,\#}(x,y,t)])[\Phi(x,y,t)]\di x\di\sigma\di t
    \\
    -\alpha \int_{\R}\int_{\Om}\int_{\Permemb}[\vi^{1,\#}(x,y,t)]
    \frac{\partial }{\partial t}[\Phi (x,y,t)]\di x\di\sigma\di t=0\,
  \end{multline}
for every $(\phi,\Phi)\in\spaziogzero$, $[\Phi_t]\in L^2\big(\R;L^2(\Om\times\Permemb)\big)$
and $\vi^{1,\#}$ has zero mean value in $Y$ for a.e. $(x,t)\in\Om\times\R$ and $\vi^\#$ satisfies
\eqref{eq:BoundData_limit_per} in the trace sense
(see \cite[Definition 5.7]{Amar:Andreucci:Gianni:2014a}).
\end{defin}

\begin{remark}\label{r:rem12}
We note that by a standard approximation of periodic testing functions with functions compactly supported in a period,
the weak formulation \eqref{eq:a35_nuova} can be equivalently rewritten as
%
%a pair $(\vi^{\#},\vi^{1,\#})\in\spaziogpero$ with
%$[\vi^{1,\#}_t]\in L^2_\#\big(\R;L^2(\Om\times\Permemb)\big)$
%is a time-periodic weak solution (with period $1$) of \eqref{eq:PDE_limit_per}--\eqref{eq:BoundData_limit_per} if
% $(\vi^{\#},\vi^{1,\#})$ are $1$-periodic in time and
  \begin{multline*}
    \int_0^1\int_{\Om}\int_{Y}\dfboth
    \left( \nabla \vi^{\#}(x,t)+\nabla _{y}\vi^{1,\#}(x,y,t)\right)
    \left( \nabla \phi (x,t)+\nabla_{y}\Phi (x,y,t)\right) \di x\di y\di t
    \\
    +\int_0^1\int_{\Om}\int_{\Permemb}
    f([\vi^{1,\#}(x,y,t)])[\Phi(x,y,t)]\di x\di\sigma\di t
    \\
    -\alpha \int_0^1\int_{\Om}\int_{\Permemb}[\vi^{1,\#}(x,y,t)]
    \frac{\partial }{\partial t}[\Phi (x,y,t)]\di x\di\sigma\di t=0\,
  \end{multline*}
for every $(\phi,\Phi)\in\spaziogperoaa$, $[\Phi_t]\in L^2_{\#}\big(0,1;L^2(\Om\times\Permemb)\big)$.
%and $\vi^{1,\#}$ has zero mean value in $Y$ for a.e. $(x,t)\in\Om\times\R$ and $\vi^\#$ satisfies
%\eqref{eq:BoundData_limit_per} in the trace sense.
Hence, when it is more convenient, we replace
compactly supported testing functions with $1$-periodic testing functions.
\end{remark}

\begin{prop}\label{p:prop7}
Under the assumptions \eqref{eq:assumpt1}--\eqref{eq:a38} and \eqref{eq:h1},
problem \eqref{eq:PDE_limit_per}--\eqref{eq:BoundData_limit_per} admits a $1$-periodic in time
solution.
\end{prop}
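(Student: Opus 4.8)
The plan is to mimic, at the two-scale level, the regularization argument used for Proposition \ref{p:prop6}. For $\delta>0$ set $f_\delta(s):=f(s)+\delta s$; since $f\in\CC^1(\R)$ is strictly increasing one has $f_\delta^\prime=f^\prime+\delta\geq\delta>0$ on $\R$, so $f_\delta$ is coercive in the sense of \eqref{eq:intro1}. Hence the results of \cite{Amar:Andreucci:Gianni:2014a} for the \emph{coercive} homogenized problem furnish, for each $\delta>0$, a (unique) $1$-periodic in time weak solution $(u^\#_\delta,u^{1,\#}_\delta)\in\spaziogperaa$, with $[u^{1,\#}_{\delta,t}]\in L^2_\#\big(0,1;L^2(\Om\times\Permemb)\big)$, of \eqref{eq:PDE_limit_per}--\eqref{eq:BoundData_limit_per} with $f$ replaced by $f_\delta$.

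The next step is to derive a priori bounds uniform in $\delta$, mirroring \eqref{eq:a39}--\eqref{eq:a42}. Testing the periodic form of the weak identity of Remark \ref{r:rem12} with $(\phi,\Phi)=(u^\#_\delta-\dato,\,u^{1,\#}_\delta)$, using $\nabla_y\dato=0$, periodicity to kill $\int\!\!\int\!\!\int[u^{1,\#}_\delta]\partial_t[u^{1,\#}_\delta]$, Young's inequality and \eqref{eq:a36} applied to $f_\delta$, one obtains a $\delta$-independent bound on $\nabla u^\#_\delta+\nabla_y u^{1,\#}_\delta$ in $L^2(\Om\times Y\times(0,1))$ and on $[u^{1,\#}_\delta]$ in $L^2(\Om\times\Permemb\times(0,1))$. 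Testing instead with $(u^\#_{\delta,t}-\dato_t,\,u^{1,\#}_{\delta,t})$ and integrating by parts in $t$, the term $\int\!\!\int\!\!\int\dfboth(\nabla u^\#_\delta+\nabla_y u^{1,\#}_\delta)\cdot(\nabla u^\#_{\delta,t}+\nabla_y u^{1,\#}_{\delta,t})$ and the term $\int\!\!\int\!\!\int f_\delta([u^{1,\#}_\delta])[u^{1,\#}_{\delta,t}]$ both vanish by periodicity, leaving a uniform bound on $[u^{1,\#}_{\delta,t}]$ in $L^2(\Om\times\Permemb\times(0,1))$; a further formal differentiation of the system in $t$, again tested with $(u^\#_{\delta,t}-\dato_t,u^{1,\#}_{\delta,t})$, gives a uniform bound on $\nabla u^\#_{\delta,t}+\nabla_y u^{1,\#}_{\delta,t}$. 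Splitting the $Y$-average from the zero-mean part of $u^{1,\#}_\delta$, and using Poincaré in $\Om$ (recall $u^\#_\delta-\dato$ has zero trace on $\partial\Om$) together with Poincaré--Wirtinger in $Y$, these estimates yield, uniformly in $\delta$, bounds for $u^\#_\delta$ in $H^1\big(0,1;H^1(\Om)\big)$, for $u^{1,\#}_\delta$ in $H^1\big(0,1;L^2(\Om;{\X}^1_\#(Y))\big)$ and, via the trace theorem on $\Perint,\Perout$, for $[u^{1,\#}_\delta]$ in $L^2\big(\Om;H^1(0,1;L^2(\Permemb))\cap L^2(0,1;H^{1/2}(\Permemb))\big)$.

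Finally one lets $\delta\to0$. Along a subsequence $u^\#_\delta\wto u^\#$, $u^{1,\#}_\delta\wto u^{1,\#}$, $\nabla_y u^{1,\#}_\delta\wto\nabla_y u^{1,\#}$, $[u^{1,\#}_\delta]\wto[u^{1,\#}]$ and $[u^{1,\#}_{\delta,t}]\wto[u^{1,\#}_t]$ in the respective $L^2$ spaces; the zero-$Y$-mean constraint, the $1$-periodicity in $t$ and the Dirichlet condition $u^\#=\dato$ on $\partial\Om$ all pass to the limit, $\delta\int[u^{1,\#}_\delta][\Phi]\to0$, and the (bi)linear terms in \eqref{eq:a35_nuova} pass by weak convergence. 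The delicate point — and the one I expect to be the main obstacle — is the passage to the limit in the nonlinear membrane term $\int_0^1\!\!\int_\Om\!\!\int_{\Permemb}f([u^{1,\#}_\delta])[\Phi]$: one needs strong convergence $[u^{1,\#}_\delta]\to[u^{1,\#}]$ in $L^2(\Om\times\Permemb\times(0,1))$. Unlike the microscopic case of Section \ref{s:asymptotic1}, where the traces live in $H^{1/2}(\Memb)$ over the full physical domain, here $u^{1,\#}_\delta$ carries no regularity in the slow variable $x$, so one cannot apply Aubin--Lions directly on $\Om\times\Permemb$; instead I would argue fibrewise, combining for a.e.\ $x\in\Om$ the $L^2(0,1;H^{1/2}(\Permemb))$ bound on $[u^{1,\#}_\delta(x,\cdot,\cdot)]$ with the $L^2(0,1;L^2(\Permemb))$ bound on $[u^{1,\#}_{\delta,t}(x,\cdot,\cdot)]$ and the compact embedding $H^{1/2}(\Permemb)\hookrightarrow\hookrightarrow L^2(\Permemb)$ to get strong $L^2$-convergence for a.e.\ $x$, then recover the global strong convergence by dominated convergence in $x$; alternatively, since $f$ is monotone, the limiting term can be identified by a Minty--Browder argument. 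Once this is done, $(u^\#,u^{1,\#})$ solves \eqref{eq:PDE_limit_per}--\eqref{eq:BoundData_limit_per} in the sense of Definition \ref{d:def2}, and the uniform bounds place it in $\spaziogperaa$ with $[u^{1,\#}_t]\in L^2_\#\big(0,1;L^2(\Om\times\Permemb)\big)$, which is the asserted conclusion.
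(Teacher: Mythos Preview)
Your overall strategy coincides with the paper's: regularize by $f_\delta=f+\delta\,\mathrm{id}$, invoke the coercive result of \cite{Amar:Andreucci:Gianni:2014a} to get a periodic solution $(u^\#_\delta,u^{1,\#}_\delta)$, derive $\delta$-uniform energy bounds exactly as you describe (the paper obtains \eqref{eq:a56}--\eqref{eq:a61} and \eqref{eq:a58}), and pass to the limit $\delta\to0$.

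The divergence is in the treatment of the nonlinear membrane term, and your first option---fibrewise Aubin--Lions plus dominated convergence in $x$---does not go through. The uniform bounds you have are of the type
\[
\int_\Om \big\|[u^{1,\#}_\delta(x,\cdot,\cdot)]\big\|^2_{L^2(0,1;H^{1/2}(\Permemb))\cap H^1(0,1;L^2(\Permemb))}\di x\leq C,
\]
i.e.\ they are $L^2$ in $x$, not pointwise. For a fixed $x$ the sequence $\delta\mapsto[u^{1,\#}_\delta(x,\cdot,\cdot)]$ need not be bounded at all, so Aubin--Lions cannot be applied fibre by fibre, and there is no dominating function in $x$ to invoke dominated convergence afterwards. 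Equivalently, although $H^1(0,1;L^2(\Permemb))\cap L^2(0,1;H^{1/2}(\Permemb))\hookrightarrow\hookrightarrow L^2((0,1)\times\Permemb)$, the Bochner lift $L^2(\Om;X)$ is \emph{not} compactly embedded in $L^2(\Om;Y)$ when $X\hookrightarrow\hookrightarrow Y$. This is precisely the obstruction you yourself flagged (no regularity in the slow variable $x$), and it is fatal for the fibrewise route.

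The paper takes your second option, the Minty monotone-operator argument, and this is where the actual work lies. One writes $f_\delta([u^{1,\#}_\delta])\rightharpoonup\mu$ weakly and must identify $\mu=f([u^{1,\#}])$. The standard Minty trick requires testing with perturbations of the limit solution itself, but $(u^\#,u^{1,\#})$ is not a priori smooth enough to be used directly. The paper therefore constructs approximations $\psi_k=\phi_0^k+\phi_1^k+\lambda\phi_2$ with $\phi_0^k\to u^\#-\dato$ in $L^2(0,1;H^1_0(\Om))$ and $\phi_1^k\to u^{1,\#}$ in $L^2(0,1;L^2(\Om;\X^1_\#(Y)))$, where $\phi_1^k$ is built by solving an auxiliary elliptic cell problem \eqref{eq:perfi_pde}--\eqref{eq:perfi_jump} with prescribed jump $\widetilde\phi_1^k\to[u^{1,\#}]$ (the bound \eqref{eq:a58} on $[u^{1,\#}_t]$ is used here). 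One then expands the monotonicity inequality
\[
\int_0^1\!\!\int_{\Om\times\Permemb}\big(f_\delta([u^{1,\#}_\delta])-f_\delta([\phi_1^k+\lambda\phi_2])\big)\big([u^{1,\#}_\delta]-[\phi_1^k+\lambda\phi_2]\big)\geq0
\]
together with the nonnegative energy term, uses the weak formulation to substitute, and passes to the limit $\delta\to0$, then $k\to\infty$, then $\lambda\to0^\pm$; time-periodicity is crucial to kill the $\partial_t$ cross-terms at each step. Your sketch would be complete once you commit to the Minty route and supply this approximation construction.
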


\begin{proof}
For $\delta >0$, let us denote by $f_{\delta}(s):= f(s)+\delta s$, for every $s\in\R$,
and consider the problem
 \begin{alignat}2
  \label{eq:PDE_limit_perdelta}
  -\Div\left(\dfav \nabla u_\delta^{\#}+\int_Y\dfboth\nabla_yu_\delta^{1,\#}\di y\right)&=0\,,& \quad
  \text{in }\Om &\times\R\,;\\
  \label{eq:PDEper_limit_perdelta}
  -\Div_y(\dfboth\nabla u_\delta^{\#}+\dfboth \nabla_y u_\delta^{1,\#})&=0\,,&
  \!\!\!\!\!\!\!\!\!\!\!\!\!\!\!\!\!\!\!\text{in }\Om\times(\Perint\cup\Perout) &\times\R\,;\\
  \label{eq:FluxCont_limit_perdelta}
  [\dfboth (\nabla u_\delta^{\#}+\nabla_y u_\delta^{1,\#})\!\! \cdot\!\! \nu] &=0,& \quad
  \text{on }\Om\times\Permemb &\times\R\,;\\
  \label{eq:Circuit_limit_perdelta}
   {\alpha}\frac{\partial}{\partial t}[u_\delta^{1,\#}] +
   f_\delta\left({[u_\delta^{1,\#}]}\right) &=\dfboth (\nabla u_\delta^{\#}+\nabla_y u_\delta^{1,\#}) \cdot \nu\,,&
  \text{on }\Om\times\Permemb &\times\R\,;\\
  \label{eq:InitData_limit_perdelta}
  [u_\delta^{1,\#}](x,y,\cdot)& \text{ is $1$-periodic,}&   \text{on }\Om & \times\Permemb\,;\\
  \label{eq:BoundData_limit_perdelta}
  u_\delta^{\#}(x,t)&=\dato(x,t)\,,& \text{on }\partial\Om &\times\R\,;
\end{alignat}
where $u_\delta^{1,\#}$ has zero mean value on $Y$ for a.e. $(x,t)\in\Om\times\R$.

Since $f_\delta$ has a strictly positive derivative on $\R$, by the results proved in \cite[Section 5]{Amar:Andreucci:Gianni:2014a}, a unique periodic solution
$(u_{\delta }^{\#},u_\delta^{1,\#})$ of problem \eqref{eq:PDE_limit_perdelta}-- \eqref{eq:BoundData_limit_perdelta}
does exist, i.e. $(u_{\delta }^{\#},u_\delta^{1,\#})$ satisfies
\begin{multline}\label{eq:a35}
\int_{0}^{1}\int_{\Om}\int_{Y}\dfboth
\left( \nabla u_{\delta }^{\#}(x,t)+\nabla _{y}u_{\delta }^{1,\#}(x,y,t)\right)
\left( \nabla \phi (x,t)+\nabla_{y}\Phi (x,y,t)\right) \di x\di y\di t
\\
+\int_{0}^{1}\int_{\Om}\int_{\Permemb}
f_\delta([u_{\delta }^{1,\#}(x,y,t)])[\Phi(x,y,t)]\di x\di\sigma\di t
\\
-\alpha \int_{0}^{1}\int_{\Om}\int_{\Permemb}[u_{\delta }^{1,\#}(x,y,t)]
\frac{\partial }{\partial t}[\Phi (x,y,t)]\di x\di\sigma\di t=0\,,
\end{multline}
for every $(\phi,\Phi)\in\spaziogperoaa$, $[\Phi_t]\in L^2_\#\big(0,1;L^2(\Om\times\Permemb)\big)$
(recall Remark \ref{r:rem12}).
Moreover $u_{\delta }^{1,\#}$ has zero mean value in $Y$ for a.e. $(x,t)\in\Om\times\R$
and $u^\#_\delta$ satisfies \eqref{eq:BoundData_limit_perdelta} in the trace sense.
By \eqref{eq:a35} we get that $(u_{\delta }^{\#},u_\delta^{1,\#})$ satisfies an energy
estimate, easily obtained replacing $(\phi ,\Phi )$ with
$(u_{\delta }^{\#}-\dato,u_\delta^{1,\#})$, which implies
\begin{multline}\label{eq:a56}
\int_{0}^{1}\int_{\Om}\int_{Y}\frac{\dfboth}{2}|\nabla u_{\delta }^{\#}+\nabla_{y}u_\delta^{1,\#}|^{2}\di x\di y\di t
+\int_{0}^{1}\int_{\Om}\int_{\Permemb}f_{\delta}([u_\delta^{1,\#}])[u_\delta^{1,\#}]\di x\di\sigma\di t
\\
=\int_{0}^{1}\int_{\Om}\int_{Y}\frac{\dfboth}{2}|\nabla \dato|^{2}\di x\di y\di t\,,
\end{multline}
where we take into account
\begin{equation}\label{eq:star}
\int_0^1 [u_{\delta,t }^{1,\#}][u_{\delta }^{1,\#}]\di t =
\frac{1}{2}\int_0^1 \frac{\partial}{\partial t}[u_{\delta }^{1,\#}]^2
\di t =0\,,
\end{equation}
which is a consequence of the periodicity of $u_{\delta }^{1,\#}$.

From \eqref{eq:a56}, working as done in \eqref{eq:a39}--\eqref{eq:a40} of Section \ref{s:asymptotic1}
and taking into account \eqref{eq:a36} we get
\begin{equation}\label{eq:a57}
\int_{0}^{1}\int_{\Om}\int_{Y}\dfboth| \nabla u_{\delta}^{\#}+\nabla_y u_{\delta }^{1,\#}|^{2}\di x \di y\di t
+\int_{0}^{1}\int_{\Om}\int_{\Permemb}\lambda_1[u_{\delta }^{1,\#}]^{2}\di\sigma \di t
\leq \gamma\,,
\end{equation}
where $\gamma$ is a constant depending on $\lambda_1, \lambda_2,|\Permemb|$ and the $H^{1}$-norm
of $\dato $.

Replacing $(\phi ,\Phi )$ in \eqref{eq:a35} with $(u_{\delta, t}^{\#}-\dato_{t},u_{\delta, t}^{1,\#})$,
by \eqref{eq:a57}, \eqref{eq:a36} and taking into account the fact that
\begin{equation*}
\int_0^1 ( \nabla u_{\delta}^{\#}+\nabla_y u_{\delta }^{1,\#})(\nabla u_{\delta,t}^{\#}+\nabla_y u_{\delta,t}^{1,\#})
\di t=
\frac{1}{2}\int_0^1 \frac{\partial}{\partial t}| \nabla u_{\delta}^{\#}+\nabla_y u_{\delta }^{1,\#}|^{2}\di t =0
\end{equation*}
and, denoting by $F_\delta$ a primitive of $f_\delta$,
\begin{equation*}
\int_{0}^{1}\int_{\Om}\int_{\Permemb}f_\delta([u_{\delta }^{1,\#}])[u_{\delta,t}^{1,\#}]\di x\di\sigma\di t
=\int_{0}^{1}\int_{\Om}\int_{\Permemb} \frac{\partial F_\delta([u_{\delta }^{1,\#}])}{\partial t} \di x\di\sigma
\di t=0\,,
\end{equation*}
because of the periodicity, we get
\begin{equation}\label{eq:a58}
\alpha \int_{0}^{1}\int_{\Om}\int_{\Permemb}[u_{\delta,t}^{1,\#}]^{2}\di x\di\sigma\di t
\leq \gamma\,,
\end{equation}
where, again $\gamma$ depends on $\lambda_1,\lambda_2,|\Permemb|$ and the $H^{1}$-norms of
$\dato $ and $\dato_{t}$.
From \eqref{eq:a57}, we obtain
\begin{alignat}1
\label{eq:a60}
 &\int_{0}^{1}\int_{\Om}| \nabla u_{\delta }^{\#}|^{2}\di x\di t
\leq \gamma\,,
\\
\label{eq:a61}
& \int_{0}^{1}\int_{\Om}\int_{Y}| \nabla_y u_{\delta }^{1,\#}|^{2}\di x\di y\di t
\leq \gamma\,.
\end{alignat}
Indeed,
\begin{equation}\label{eq:a20}
\begin{aligned}
& \int_0^1\int_\Om\int_{Y}| \nabla_y u_{\delta }^{1,\#}(x,y,t)|^{2}\di y\di x\di t+
\int_0^1\int_{\Om}| \nabla u_{\delta }^{\#}|^{2}\di x\di t
\\
\leq &\gamma-2\int_0^1\int_{\Om}\int_Y\nabla_y u_{\delta }^{1,\#}(x,y,t)\nabla u_{\delta }^{\#}(x,t)\di y\di x\di t
\\
= & \gamma-
2\int_0^1\int_{\Om}\nabla u_{\delta }^{\#}\left( \int_{Y}\nabla_y u_{\delta }^{1,\#}(x,y,t)\di y\right) \di x\di t
\\
\leq &\gamma+
2\int_0^1\int_{\Om}|\nabla u_{\delta }^{\#}|\left( \int_{\Permemb}|[ u_{\delta }^{1,\#}(x,y,t)]|\di \sigma\right)\di x\di t
\\
\leq & \gamma+\frac{1}{2}\int_0^1\int_{\Om}| \nabla u_{\delta }^{\#}|^{2}\di x \di t+
8|\Permemb|\int_0^1\int_{\Om}\int_{\Permemb}[ u_{\delta }^{1,\#}(x,y,t)]^{2}\di\sigma \di x\di t
\\
\leq &\gamma+\frac{1}{2}\int_0^1\int_{\Om}| \nabla u_{\delta }^{\#}(x,t)|^{2}\di x\di t+\gamma\,.
\end{aligned}
\end{equation}
In order to be able to pass to the limit $\delta\to 0$ we need a formulation with vanishing boundary data. To this
purpose we set $v^\#_\delta= u^\#_\delta-\dato$; clearly $v^\#_\delta$ satisfies
 \begin{alignat}2
  \label{eq:PDE_lpd}
  -\Div\left(\dfav \nabla v_\delta^{\#}+\int_Y\dfboth\nabla_yu_\delta^{1,\#}\di y\right)&=
  \Div\left(\dfav\nabla \dato\right)\,,& \quad
  \text{in }\Om &\times\R\,;\\
  \label{eq:PDEper_lpd}
  -\Div_y(\dfboth\nabla v_\delta^{\#}+\dfboth \nabla_y u_\delta^{1,\#})&=0\,,&
  \!\!\!\!\!\!\!\!\!\!\!\!\!\!\!\!\!\!\!\text{in }\Om\times(\Perint\cup\Perout) &\times\R\,;\\
  \label{eq:FluxCont_lpd}
  [\dfboth (\nabla v_\delta^{\#}+\nabla_y u_\delta^{1,\#})\!\! \cdot\!\! \nu] &=
  -[\dfboth\nabla\dato\cdot\nu]\,,& \quad
  \text{on }\Om\times\Permemb &\times\R\,;\\
  \label{eq:Circuit_lpd}
   {\alpha}\frac{\partial}{\partial t}[u_\delta^{1,\#}] +
   f_\delta\left({[u_\delta^{1,\#}]}\right) &=\dfout (\nabla v_\delta^{\#}+\nabla_y u_\delta^{1,\#}) \cdot \nu
   +\dfout\nabla\dato\cdot\nu\,,&
  \text{on }\Om\times\Permemb &\times\R\,;\\
  \label{eq:InitData_lpd}
  [u_\delta^{1,\#}](x,y,\cdot)& \text{ is $1$-periodic,}&   \text{on }\Om & \times\Permemb\,;\\
  \label{eq:BoundData_lpd}
  v_\delta^{\#}(x,t)&=0\,,& \text{on }\partial\Om &\times\R\,,
\end{alignat}
or, in the weak form,
\begin{multline}\label{eq:a35v}
\int_{0}^{1}\int_{\Om}\int_{Y}\dfboth
\left( \nabla v_{\delta }^{\#}(x,t)+\nabla _{y}u_{\delta }^{1,\#}(x,y,t)\right)
\left( \nabla \phi (x,t)+\nabla_{y}\Phi (x,y,t)\right) \di x\di y\di t
\\
+\int_{0}^{1}\int_{\Om}\int_{\Permemb}
f_\delta([u_{\delta }^{1,\#}(x,y,t)])[\Phi(x,y,t)]\di x\di\sigma\di t
\\
-\alpha \int_{0}^{1}\int_{\Om}\int_{\Permemb}[u_{\delta }^{1,\#}(x,y,t)]
\frac{\partial }{\partial t}[\Phi (x,y,t)]\di x\di\sigma\di t
\\
= -\int_{0}^{1}\int_{\Om}\dfav \nabla\dato(x,t)\nabla \phi (x,t)\di x\di t
+\int_{0}^{1}\int_{\Om}\int_\Permemb [\dfboth\nabla\dato(x,t)\cdot \nu] \Phi^{(1)}(x,y,t)\di x\di \sigma\di t
\\
+\int_{0}^{1}\int_{\Om}\int_\Permemb \dfout\nabla\dato(x,t)\cdot \nu [\Phi(x,y,t)]\di x\di \sigma\di t
\,,
\end{multline}
for $(\phi,\Phi)$ as in Remark \ref{r:rem12}.
At this point, \eqref{eq:a57}--\eqref{eq:a61} allow us to pass to the limit with respect to
$\delta $ in the weak formulation \eqref{eq:a35v},
thus proving that there exists a periodic (in time) pair of functions $(v^{\#},u^{1,\#})
\in L^2_\#\big(0,1;H^1_0(\Om)\big)\times L^2_\#\big(0,1;L^2(\Om;\X^1_\#(Y))\big) $ such that
$u^{1,\#}$ has zero mean value on $Y$, for a.e. $(x,t)\in\Om\times\R$, and $(v^{\#},u^{1,\#})$ satisfies
the homogenized problem
\begin{multline}
\label{eq:a88}
  \int_{0}^{1}\int_{\Om\times Y} \dfboth (\nabla v^\# +\nabla_yu^{1,\#}) \cdot \nabla \phi \di
  x\di y\di t +
  \int_{0}^{1}\int_{\Om\times Y} \dfboth (\nabla v^\#+\nabla_yu^{1,\#})\cdot \nabla_y \Phi \di
  x\di y\di t
  \\
  +\int_{0}^{1}\int_\Om\int_{\Permemb}  \mu [\Phi]  \di x \di\sigma \di t
  - {\alpha}\int_{0}^{1}\int_\Om\int_{\Permemb} [u^{1,\#}] \pder{}{t}
  [\Phi] \di x\di\sigma\di t
  \\
  = -\int_{0}^{1}\int_{\Om}\dfav \nabla\dato\nabla \phi \di x\di t
+\int_{0}^{1}\int_{\Om}\int_\Permemb [\dfboth\nabla\dato\cdot \nu] \Phi^{(1)}\di x\di \sigma\di t
\\
+\int_{0}^{1}\int_{\Om}\int_\Permemb \dfout\nabla\dato\cdot \nu [\Phi]\di x\di \sigma\di t
\end{multline}
for every test function $(\phi,\Phi)\in\spaziogperoaa$ with $[\Phi_t]\in L^2_\#\big(0,1;L^2(\Om\times\Permemb)\big)$, where we have taken into account that
\eqref{eq:assumpt1} and \eqref{eq:a57} imply
\begin{equation*}
f_\delta ([u^{1,\#}_\delta]) =f ([u^{1,\#}_\delta]) +\delta [u^{1,\#}_\delta]\rightharpoonup  \mu\,,
\qquad\hbox{weakly in $L^2\big((0,T)\times\Om\times\Memb\big)$, when $\delta\to 0$.}
\end{equation*}
It remains to identify $\mu$. To this purpose, we follow the Minty monotone operators method.
Let us consider a sequence of $1$-periodic in time test functions
$\psi_k(x,y,t)=\phi^k_0(x,t)+\phi^k_1\left(x,y,t\right)+\lambda\phi_2\left(x,y,t\right)$,
with $\phi^k_0\in{\mathcal C}^\infty(\Om\times\R)$,
$\phi^k_1\in \CC^\infty\big(\Om\times\R;{\Cper}^\infty_\#(Y)\big)$,
$\phi_2\in {\mathcal C}^1_c\big(\Om\times(0,1);{\Cper}^1_\#(Y)\big)$,
with $\phi^k_0(\cdot, t)$ vanishing on $\partial\Om$ for $t\in\R$,
$\phi^k_0\to v^\#$ strongly in $L^2_{loc}\big(\R;H^1_0(\Om)\big)$, $\phi^k_1\to u^{1,\#}$ strongly in
$L^2_{loc}(\R;L^2(\Om;{\mathcal X}^1_\#(Y))\big)$, $[\phi^k_{1}]\to [u^{1,\#}]$ and $[\phi^k_{1,t}]\to [u^{1,\#}_t]$
strongly in $L^2((0,1)\times\Om\times\Permemb)$, i.e.
\begin{multline*}
\int_0^1 \int_\Om \Vert\phi^k_1(x,\cdot,t)- u^{1,\#}(x,\cdot,t)\Vert^2_{H^1(\Per_i)}\di t\di x
\\
+\int_0^1 \int_\Om \Vert [\phi^k_1(x,\cdot,t)]- [u^{1,\#}(x,\cdot,t)]\Vert^2_{L^2(\Permemb)}\di t\di x
\\
+\int_0^1 \int_\Om \Vert [\phi^k_{1,t}(x,\cdot,t)]- [u^{1,\#}_{t}(x,\cdot,t)]\Vert^2_{L^2(\Permemb)}\di t\di x
\to 0\,,\qquad\hbox{for $k\to +\infty$, $i=1,2$.}
\end{multline*}
Clearly, $\phi^k_0$ can be constructed by means of standard convolutions with regular kernels; instead,
in order to construct $\phi^k_1$ we proceed as follows.
Taking into account that,
%by Lemma 5.2. of \cite{Amar:Andreucci:Bisegna:Gianni:2013},
passing to the limit for $\delta\to 0$ in \eqref{eq:a58}, we have
\begin{equation}\label{eq:a89}
[u^{1,\#}_{t}]\in L^2((0,1)\times\Om\times \Permemb)\,,
\end{equation}
by standard arguments we can approximate the jump $[u^{1,\#}]$ with a sequence
of $1$-periodic in time functions
$\widetilde\phi_1^k\in \CC^\infty(\Om\times\Permemb\times\R)$ such that
$\widetilde\phi_1^k\to [u^{1,\#}]$ strongly in $L^2\big((0,1)\times\Om;H^{1/2}(\Permemb)\big)$
and $\widetilde\phi_{1,t}^k\to [u^{1,\#}_t]$ strongly in $L^2\big((0,1)\times\Om\times \Permemb\big)$.
Now, define $\phi^k_1$ as the ($1$-periodic in time) solution of the problem
\begin{alignat}2
  -\Div_y\big(\dfboth (\nabla \phi^k_0+\nabla_y \phi^k_1)\big)&=0\,,&\qquad&\text{in $(\Perint\cup\Perout)\times\Om\times\R$;}
  \label{eq:perfi_pde}\\
  [\dfboth (\nabla \phi^k_0+\nabla_y \phi^k_1)\cdot\nu]&=-[\dfboth\nabla\dato\cdot\nu]\,,
  &\qquad&\text{on $\Permemb\times\Om\times\R$;}
  \label{eq:perfi_flux}\\
  [\phi^k_1]&=\widetilde\phi_1^k\,,&\qquad&\text{on $\Permemb\times\Om\times\R$;}
  \label{eq:perfi_jump}
\end{alignat}
and $\phi^k_1(x,\cdot,t)$ is $Y$-periodic with zero mean value on $Y$
for $(x,t)\in\Om\times\R$. By Lemma 7.3 in \cite{Amar:Andreucci:Bisegna:Gianni:2004a},
it follows that $\phi^k_1\in\CC^\infty\big(\Om\times[0,1];\Cper^\infty_\#(Y)\big)$.
Here, for the sake of simplicity, we work as if $\dato$ has enough regularity, otherwise
we proceed with a standard regularization procedure also on $\dato$.
Moreover, by \cite[Lemma 5]{Amar:Andreucci:Bisegna:Gianni:2005} applied to
$\phi^k_1-u^{1,\#}$ with $P=\Div_y\big(\dfboth (\nabla \phi^k_0-\nabla v^\#)\big)=0$ in $\Perint\cup\Perout$,
$Q=[\dfboth(\nabla\phi^k_0-\nabla v^\#)]$, and $S=\widetilde\phi_1^k-[u^{1,\#}]$, we obtain
\begin{equation}\label{eq:a89nuova}
||\phi^k_1-u^{1,\#}||_{L^2((0,1)\times\Om;\X^1_\#(Y))}\leq \gamma
(||\widetilde\phi^k_1-[u^{1,\#}]||_{L^2((0,1)\times\Om;H^{1/2}(\Permemb))}
+\Vert \nabla\phi^k_0-\nabla v^\#\Vert_{L^2((0,1)\times\Om)})\,.
\end{equation}
Since the right-hand side of \eqref{eq:a89nuova} tends to zero for $k\to +\infty$
we obtain the desired approximation.

Taking only into account the monotonicity assumption on $f$, the periodicity in time of
$\phi^k_{0}$ and $\phi^k_{1}$ and Remark \ref{r:rem12},  we calculate
\begin{multline}
\label{eq:monotonia1}
\int_{0}^1\int_{\Om\times Y}
\dfboth(\nabla v^\#_\delta +\nabla_y u^{1,\#}_{\delta}-\nabla\phi^k_0-\nabla_y\phi^k_1-\lambda\nabla_y\phi_2)
\cdot(\nabla v^\#_\delta-\nabla\phi^k_0)\di x\di y\di t
\\
+\int_{0}^1\int_{\Om\times Y}\dfboth(\nabla v^\#_\delta +\nabla_y u^{1,\#}_{\delta}-
\nabla\phi^k_0-\nabla_y\phi^k_1-\lambda\nabla_y\phi_2)
\cdot(\nabla_y u^{1,\#}_{\delta}-\nabla_y\phi^k_1-\lambda\nabla_y\phi_2)\di x\di y\di t
\\
 +{\alpha}\int_{0}^1\int_{\Om\times\Permemb} \frac{\partial}{\partial t}\left([u^{1,\#}_\delta]
 -[\phi^k_1+\lambda\phi_2]\right)\left([u^{1,\#}_\delta]-[\phi^k_1+\lambda\phi_2]\right)
 \di x\di\sigma\di t
\\
+ \int_{0}^1\int_{\Om\times\Permemb}\left(f_\delta([u^{1,\#}_\delta])
-{f_\delta\left([\phi^k_1+\lambda\phi_2]\right)}\right)
\left([u^{1,\#}_\delta]-[\phi^k_1+\lambda\phi_2]\right) \di x\di\sigma\di t
\\
=
\int_{0}^1\int_{\Om\times Y}
\dfboth|\nabla v^\#_\delta +\nabla_y u^{1,\#}_{\delta}-\nabla\phi^k_0-\nabla_y\phi^k_1-\lambda\nabla_y\phi_2|^2
\di x\di y\di t
\\
%+{\alpha}\int_{0}^1\int_{\Om\times\Permemb} \frac{\partial}{\partial t}\left([u^{1,\#}_\delta]
%-[\phi^k_1+\lambda\phi_2]\right)\left([u^{1,\#}_\delta]-[\phi^k_1+\lambda\phi_2]\right)
%\di x\di\sigma\di t
%\\
+ \int_{0}^1\int_{\Om\times\Permemb}\left(f_\delta([u^{1,\#}_\delta])
-{f_\delta\left([\phi^k_1+\lambda\phi_2]\right)}\right)
\left([u^{1,\#}_\delta]-[\phi^k_1+\lambda\phi_2]\right) \di x\di\sigma\di t
\geq 0\,,
\end{multline}
where we have taken into account that the time-periodicity of $u^{1,\#}_\delta$,
$\phi^k_1$ and $\phi_2$ implies
\begin{multline*}
{\alpha}\int_{0}^1\int_{\Om\times\Permemb} \frac{\partial}{\partial t}\left([u^{1,\#}_\delta]
-[\phi^k_1+\lambda\phi_2]\right)\left([u^{1,\#}_\delta]-[\phi^k_1+\lambda\phi_2]\right)
\di x\di\sigma\di t
\\
= \frac{\alpha}{2}\int_{0}^1\int_{\Om\times\Permemb} \frac{\partial}{\partial t}\left([u^{1,\#}_\delta]
-[\phi^k_1+\lambda\phi_2]\right)^2\di x\di\sigma\di t
\\
= \frac{\alpha}{2}\int_{\Om\times\Permemb} \left([u^{1,\#}_\delta(x,y,1)]
-[\phi^k_1(x,y,1)]\right)^2\di x\di\sigma
\\
-\frac{\alpha}{2}\int_{\Om\times\Permemb} \left([u^{1,\#}_\delta(x,y,0)]
-[\phi^k_1(x,y,0)]\right)^2\di x\di\sigma=0\,.
\end{multline*}
Taking the function $(v^{\#}_\delta-\phi_0^k,u^{1,\#}_\delta-\phi^k_1-\lambda\phi_2)$
as a test function $(\varphi,\Phi)$ in \eqref{eq:a35v},
inequality \eqref{eq:monotonia1} can be rewritten as
\begin{multline}
\label{eq:monotonia1bis}
-\int_{0}^1\int_{\Om\times Y}
\dfboth(\nabla\phi^k_0+\nabla_y\phi^k_1+\lambda\nabla_y\phi_2 )
\cdot(\nabla v^\#_\delta-\nabla\phi^k_0)\di x\di y\di t
\\
-\int_{0}^1\int_{\Om\times Y}\dfboth(\nabla\phi^k_0+\nabla_y\phi^k_1+\lambda\nabla_y\phi_2)
\cdot(\nabla_y u^{1,\#}_{\delta}-\nabla_y\phi^k_1-\lambda\nabla_y\phi_2)\di x\di y\di t
\\
-{\alpha}\int_{0}^1\int_\Om\int_{\Om\times\Permemb} \frac{\partial}{\partial t}
[\phi^k_1+\lambda\phi_2]\left([u^{1,\#}_\delta]-[\phi^k_1+\lambda\phi_2]\right)
\di x\di\sigma\di t
\\
- \int_{0}^1\int_{\Om\times\Permemb}{f_\delta\left([\phi^k_1+\lambda\phi_2]\right)}
\left([u^{1,\#}_\delta]-[\phi^k_1+\lambda\phi_2]\right)\di x \di\sigma\di t
\\
\geq
\int_{0}^{1}\int_{\Om}\dfav \nabla\dato\cdot(\nabla v^\#_\delta-\nabla\phi^k_0)\di x\di t
-\int_{0}^{1}\int_{\Om}\int_\Permemb [\dfboth\nabla\dato\cdot \nu] (u^{1,\#}_\delta-\phi^k_1-\lambda\phi_2)^{(1)}
\di x\di \sigma\di t
\\
-\int_{0}^{1}\int_{\Om}\int_\Permemb \dfout\nabla\dato\cdot \nu ([u^{1,\#}_\delta]-[\phi^k_1+\lambda\phi_2])
\di x\di \sigma\di t
\,.
\end{multline}
Hence, passing to the limit as $\delta\to 0$ and using \eqref{eq:a58}, it follows
\begin{multline}
\label{eq:monotonia2}
-\int_{0}^1\int_{\Om\times Y}
\dfboth(\nabla\phi^k_0+\nabla_y\phi^k_1+\lambda\nabla_y\phi_2)
\cdot(\nabla v^\#-\nabla\phi^k_0)\di x\di y\di t
\\
-\int_{0}^1\int_{\Om\times Y}\dfboth(\nabla\phi^k_0+\nabla_y\phi^k_1+\lambda\nabla_y\phi_2)
\cdot(\nabla_y u^{1,\#}-\nabla_y\phi^k_1-\lambda\nabla_y\phi_2)\di x\di y\di t
\\
-{\alpha}\int_{0}^1\int_\Om\int_{\Om\times\Permemb} \frac{\partial}{\partial t}
[\phi^k_1+\lambda\phi_2]\left([u^{1,\#}]-[\phi^k_1+\lambda\phi_2]\right)
\di x\di\sigma\di t
\\
- \int_{0}^1\int_{\Om\times\Permemb}{f\left([\phi^k_1+\lambda\phi_2]\right)}
\left([u^{1,\#}]-[\phi^k_1+\lambda\phi_2]\right)\di x \di\sigma\di t
\\
\geq \int_{0}^{1}\int_{\Om}\dfav \nabla\dato\cdot(\nabla v^\#-\nabla\phi^k_0)\di x\di t
-\int_{0}^{1}\int_{\Om}\int_\Permemb [\dfboth\nabla\dato\cdot \nu]
(u^{1,\#}-\phi^k_1-\lambda\phi_2)^{(1)}\di x\di \sigma\di t
\\
-\int_{0}^{1}\int_{\Om}\int_\Permemb \dfout\nabla\dato\cdot \nu ([u^{1,\#}]-[\phi^k_1+\lambda\phi_2])
\di x\di \sigma\di t\,.
\end{multline}
Now, letting $k\to +\infty$, we obtain
\begin{multline}
\label{eq:monotonia3}
\int_0^1\int_\Om\int_Y\dfboth(\nabla v^\#+\nabla_yu^{1,\#}+\lambda\nabla_y\phi_2)
\cdot\lambda\nabla_y\phi_2\di x\di y\di t +
\\
{\alpha}\int_0^1\int_\Om\int_{\Permemb}\frac{\partial}{\partial t}[u^{1,\#}+\lambda\phi_2]
\lambda[\phi_2]\di x\di\sigma\di t
+\int_0^1\int_\Om\int_{\Permemb}
{f\left({[u^{1,\#}+\lambda\phi_2]}\right)}
\lambda[\phi_2]\di x\di\sigma\di t
\\
\geq
\int_{0}^{1}\int_{\Om}\int_\Permemb [\dfboth\nabla\dato\cdot \nu] \lambda\phi_2^{(1)}
\di x\di \sigma\di t
+\int_{0}^{1}\int_{\Om}\int_\Permemb \dfout\nabla\dato\cdot \nu [\lambda\phi_2]\di x\di \sigma\di t\,.
\end{multline}
Taking into account \eqref{eq:a88} with $\phi\equiv0$ and $\Phi=\phi_2$, \eqref{eq:monotonia3} becomes
\begin{multline}
\label{eq:monotonia4}
\lambda^2\int_0^1\int_\Om\int_Y\dfboth\nabla_y\phi_2
\cdot\nabla_y\phi_2\di x\di y\di t
+{\alpha}\lambda^2\int_0^1\int_\Om\int_{\Permemb}\frac{\partial}{\partial t}[\phi_2]
[\phi_2]\di x\di\sigma\di t
\\
-\lambda\int_0^1\int_\Om\int_{\Permemb}
\mu[\phi_2]\di x\di\sigma\di t
+\lambda\int_0^1\int_\Om\int_{\Permemb}
{f\left({[u^{1,\#}+\lambda\phi_2]}\right)}
[\phi_2]\di x\di\sigma\di t\geq 0\,.
\end{multline}
Assuming firstly that $\lambda>0$ and then $\lambda<0$, dividing by $\lambda$ the previous equation and
then letting $\lambda\to 0$, we obtain
\begin{equation*}
\int_0^1\int_\Om\int_{\Permemb}\mu[\phi_2]\di x\di\sigma\di t
=\int_0^1\int_\Om\int_{\Permemb}{f\left({[u^{1,\#}]}\right)}[\phi_2]\di x\di\sigma\di t\,,
\end{equation*}
which gives
\begin{equation}
\label{eq:identificazione}
\mu=f\left({[u^{1,\#}]}\right)\,.
\end{equation}
By \eqref{eq:a88} and \eqref{eq:identificazione}, setting $v^\#=u^\#+\dato$ and taking into account Remark \ref{r:rem12},
we obtain exactly the weak formulation of problem
\eqref{eq:PDE_limit_per}--\eqref{eq:BoundData_limit_per}.
\end{proof}

\begin{remark}\label{r:rem10}
Note that \eqref{eq:a58} is uniform with respect to $\delta$. Moreover, we can obtain also
estimates for $\nabla u^\#_{\delta,t}$ and $\nabla_y u^{1,\#}_{\delta,t}$
uniformly in $\delta$. Indeed, differentiating formally with respect to $t$ problem
\eqref{eq:PDE_limit_perdelta}--\eqref{eq:BoundData_limit_perdelta}, multiplying equation \eqref{eq:PDE_limit_perdelta}
(differentiated with respect to $t$) by $\big((u^{\#}_{\delta,t}-\dato_{t}),
u^{1,\#}_{\delta,t}\big)$, and finally integrating by parts, we obtain, exploiting also the periodicity in time,
\begin{equation}\label{eq:a81}
%\int_{2\tau}^{1}\int_{\Om}\int_{Y}| \dfboth\nabla u^{\#}_{\delta,t}+\dfboth\nabla_{y}u^{1,\#}_{\delta,t}|^{2}\di x\di %y\di t
%+\alpha \sup_{t\in (2\tau,1)}\int_{\Om}\int_{\Permemb}[u^{1,\#}_{\delta,t}]^{2}\di x\di\sigma
%\\
%\leq
\int_{0}^{1}\int_{\Om}\int_{Y}\dfboth| \nabla u^{\#}_{\delta,t}+\nabla_{y}u^{1,\#}_{\delta,t}|^{2}\di x\di y\di t
%+\alpha \sup_{t\in (0,1)}\int_{\Om}\int_{\Permemb}[u^{1,\#}_{\delta,t}]^{2}\widehat{\upsilon }^{\tau}(t)\di x\di\sigma
\leq {\gamma }\,
\end{equation}
where we used assumptions \eqref{eq:assumpt1}, \eqref{eq:h1} and inequality \eqref{eq:a58}. Now, proceeding as in the proof of \eqref{eq:a60} and
\eqref{eq:a61}, we obtain
\begin{alignat}1
\label{eq:a60bis}
 &\int_{0}^{1}\int_{\Om}| \nabla u_{\delta,t }^{\#}|^{2}\di x\di t
\leq \gamma\,,
\\
\label{eq:a61bis}
& \int_{0}^{1}\int_{\Om}\int_{Y}| \nabla_y u_{\delta,t }^{1,\#}|^{2}\di x\di y\di t
\leq \gamma\,.
\end{alignat}
Therefore, passing to the limit for $\delta\to 0^+$, in \eqref{eq:a58}, \eqref{eq:a60bis} and \eqref{eq:a61bis}, we obtain that
the same estimates hold for $(u^\#,u^{1,\#})$.

This implies that $(u^\#,u^{1,\#})$ belongs to $\spaziogperaa$.
\end{remark}

It remains to prove that any solution $(u,u^{1})$ of the homogenized
problem converges to $(u^{\#},u^{1,\#})$ as $t\rightarrow \infty $.
This is the purpose of the next theorem.

\begin{thm}\label{t:t6}
Let $(u,u^1)\in L^2\big(0,T);H^1(\Om)\big)\times L^2\big(\Om\times(0,T);\X^1_\#(Y)\big)$ be the solution of problem
\eqref{eq:PDE_limit}--\eqref{eq:BoundData_limit}. Then, for $t\to +\infty$, $(u,u^1)\to
(u^{\#},u^{1,\#})$ in the following sense:
\begin{alignat}1
\label{eq:decayperiodic_neww}
& \lim_{t\to+\infty}\| u(\cdot,t)-u^{\#}(\cdot,t)\| _{H^{1}(\Om)}=0\,;
\\
%\label{eq:decayperiodic_newww}
% & \lim_{t\to+\infty}\| u^1(\cdot,\cdot,t)-u^{1,\#}(\cdot,\cdot,t)\| _{L^{2}(\Om\times Y)}=0\,;
%\\
\label{eq:decayperiodic1_neww}
& \lim_{t\to+\infty}\left[\| u^1(\cdot,\cdot,t)-u^{1,\#}(\cdot,\cdot,t)\| _{L^{2}(\Om\times Y)}
%\| \nabla u(\cdot,t)-\nabla u^{\#}(\cdot,t)\| _{L^{2}(\Om)}
+\| \nabla_y u^1(\cdot,\cdot,t)-\nabla_y u^{1,\#}(\cdot,\cdot,t)\| _{L^{2}(\Om\times Y)}\right]=0\,;
\\
\label{eq:decayperiodic3_neww}
& \lim_{t\to+\infty}\|[u^1](\cdot,\cdot,t)-[u^{1,\#}](\cdot,\cdot,t)\|_{L^{2}(\Om\times\Permemb)}=0\,.
\end{alignat}
\end{thm}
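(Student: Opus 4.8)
The plan is to adapt, essentially line by line, the proof of Theorem~\ref{t:t5}, replacing the single-scale Dirichlet energy $\int_{\Om}\dfbotheps|\nabla r_\eps|^2\di x$ by the two-scale form $\int_{\Om}\int_Y\dfboth|\nabla r+\nabla_y r^1|^2\di x\di y$ and the membrane term $\frac{\alpha}{\eps}\int_{\Memb}[r_\eps]^2\di\sigma$ by $\alpha\int_{\Om}\int_{\Permemb}[r^1]^2\di x\di\sigma$. Set $r:=u^{\#}-u$, $r^1:=u^{1,\#}-u^1$; since $u$ and $u^{\#}$ both take the boundary datum $\dato$ (Definition~\ref{d:def1} and its periodic analogue), $r$ vanishes on $\partial\Om$ and $r^1$ has zero mean in $Y$. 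Subtracting the two weak formulations and writing $f([u^{1,\#}])-f([u^{1}])=g(x,y,t)\,[r^1]$ with
\[
g(x,y,t):=\frac{f([u^{1,\#}](x,y,t))-f([u^{1}](x,y,t))}{[u^{1,\#}](x,y,t)-[u^{1}](x,y,t)}
\]
(and $g:=f'([u^1])$ where the jumps coincide; $g\ge0$ by \eqref{eq:a37}), one sees that $(r,r^1)$ solves, weakly, the system obtained from \eqref{eq:PDE_limit}--\eqref{eq:BoundData_limit} on replacing $f([u^1])$ by $g(x,y,t)[r^1]$ and $S_1$ by $\widehat{S_1}:=[u^{1,\#}(\cdot,\cdot,0)]-S_1$. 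Using $(r,r^1)$ itself as a test function --- legitimate thanks to the $\delta$-uniform bounds of Remark~\ref{r:rem10} (which give $[u^{1,\#}_t]\in L^2$) and the corresponding bound for $[u^1_t]$ inherited from the energy estimates for \eqref{eq:PDE_limit}--\eqref{eq:BoundData_limit} --- one obtains the two-scale analogue of \eqref{eq:a43}, namely
\[
\int_{\Om}\int_Y\dfboth|\nabla r+\nabla_y r^1|^2\di x\di y
+\frac{\alpha}{2}\frac{d}{dt}\int_{\Om}\int_{\Permemb}[r^1]^2\di x\di\sigma
+\int_{\Om}\int_{\Permemb}g(x,y,t)[r^1]^2\di x\di\sigma=0\,.
\]
Hence $t\mapsto\int_{\Om}\int_{\Permemb}[r^1(x,y,t)]^2\di x\di\sigma$ is nonincreasing and tends to some $\overline r\ge0$.

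I claim $\overline r=0$, arguing by contradiction exactly as after \eqref{eq:a43}. If $\overline r>0$, set $E_{\overline r}(t):=\{(x,y)\in\Om\times\Permemb:[r^1(x,y,t)]^2\le\overline r/(2|\Om||\Permemb|)\}$; then, as in \eqref{eq:a44}, $\int_{(\Om\times\Permemb)\setminus E_{\overline r}(t)}[r^1]^2\di x\di\sigma\ge\overline r/2$ for every $t$. On $(\Om\times\Permemb)\setminus E_{\overline r}(t)$ the jump $|[r^1]|=|[u^{1,\#}]-[u^{1}]|$ is bounded below by a positive constant depending only on $\overline r,|\Om|,|\Permemb|$, and since $f$ is strictly increasing with $f'\ge\delta_0$ outside a compact set (assumptions \eqref{eq:a37}--\eqref{eq:a38}) this forces $g(x,y,t)\ge\chi>0$ there, with $\chi=\chi(\overline r,|\Om|,|\Permemb|)$, just as in the proof of Theorem~\ref{t:t5}. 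Plugging this into the energy identity gives $\frac{d}{dt}\bigl(\frac{\alpha}{2}\int_{\Om}\int_{\Permemb}[r^1]^2\bigr)\le-\chi\,\overline r/2<0$ for all $t$, contradicting convergence. Therefore $\lim_{t\to+\infty}\int_{\Om}\int_{\Permemb}[r^1(x,y,t)]^2\di x\di\sigma=0$, which is \eqref{eq:decayperiodic3_neww}.

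It remains to upgrade to the gradients. Integrating the energy identity over $[\hat t,+\infty)$ and using the membrane decay just proved, $\int_{\hat t}^{+\infty}\int_{\Om}\int_Y\dfboth|\nabla r+\nabla_y r^1|^2\di x\di y\di t\to 0$ as $\hat t\to+\infty$, so (as in \eqref{eq:a49}) there are times $t_n\to+\infty$, one in each interval $[\hat t(\eta)+n,\hat t(\eta)+n+1]$, at which $\int_{\Om}\int_Y\dfboth|\nabla r(t_n)+\nabla_y r^1(t_n)|^2\di x\di y\le\eta$. Testing the $(r,r^1)$-system with $(r_t,r^1_t)$, dropping the nonnegative term $\alpha\int_{\Om}\int_{\Permemb}[r^1_t]^2$ and using Young's inequality exactly as in \eqref{eq:a50}--\eqref{eq:a51}, one gets $\frac{d}{dt}\bigl(\frac12\int_{\Om}\int_Y\dfboth|\nabla r+\nabla_y r^1|^2\bigr)\le\gamma\int_{\Om}\int_{\Permemb}[r^1]^2$; integrating on $[t_n,t^{*}]$, $t^{*}\in[t_n,t_n+2]$, and running the overlapping-intervals argument of \eqref{eq:a52}--\eqref{eq:a53} yields $\sup_{t\ge\hat t+1}\int_{\Om}\int_Y\dfboth|\nabla r+\nabla_y r^1|^2\di x\di y\le\eta/2+\gamma\sup_{t\ge\hat t}\int_{\Om}\int_{\Permemb}[r^1]^2$, whence $\int_{\Om}\int_Y\dfboth|\nabla r(t)+\nabla_y r^1(t)|^2\di x\di y\to0$. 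To separate the two gradients I would invoke, pointwise in $t$, the two-scale coercivity estimate obtained just as in \eqref{eq:a20} (divergence theorem in $Y$ together with the $Y$-periodicity of $r^1$, then Young and Cauchy--Schwarz on $\Permemb$), $\int_{\Om}|\nabla r|^2+\int_{\Om}\int_Y|\nabla_y r^1|^2\le\gamma\int_{\Om}\int_Y\dfboth|\nabla r+\nabla_y r^1|^2+\gamma\int_{\Om}\int_{\Permemb}[r^1]^2$, whose right-hand side now tends to $0$; hence $\|\nabla r(t)\|_{L^2(\Om)}+\|\nabla_y r^1(t)\|_{L^2(\Om\times Y)}\to0$. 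Poincaré's inequality on $\Om$ (recall $r=0$ on $\partial\Om$) then gives \eqref{eq:decayperiodic_neww}, and the Poincaré--Wirtinger inequality on $Y$ (recall $r^1$ has zero mean in $Y$) converts the decay of $\nabla_y r^1$ into the decay of $r^1$ in $L^2(\Om\times Y)$, finishing \eqref{eq:decayperiodic1_neww}. The main obstacle is that, $f$ being noncoercive, no Gronwall estimate is available and the decay is only qualitative, read off from the Liapunov functional $t\mapsto\int_{\Om}\int_{\Permemb}[r^1]^2$; the two genuinely technical points are the admissibility of $(r,r^1)$ and $(r_t,r^1_t)$ as test functions in the two-scale weak formulation (which rests on the uniform-in-$\delta$ estimates of Remark~\ref{r:rem10} and the time regularity inherited by $(u,u^1)$) and the bookkeeping in the two-scale coercivity estimate that recovers $\nabla r$ and $\nabla_y r^1$ separately from $\nabla r+\nabla_y r^1$ and the membrane jump.
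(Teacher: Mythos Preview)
Your proposal is correct and follows essentially the same approach as the paper's proof: the same difference pair $(r,r^1)$, the same Liapunov functional $t\mapsto\int_{\Om}\int_{\Permemb}[r^1]^2$, the same contradiction via the set where $|[r^1]|$ is bounded below (yielding $g\ge\chi>0$), the same overlapping-intervals argument after testing with $(r_t,r^1_t)$, and finally the same use of the computation \eqref{eq:a20} to split $\nabla r$ from $\nabla_y r^1$ followed by Poincar\'e. The only cosmetic discrepancy is the threshold in your set $E_{\overline r}(t)$, where the paper keeps the factor $\alpha$ (writing $\overline r^1/(2\alpha|\Permemb||\Om|)$), but this is immaterial.
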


\begin{proof}
Firstly we recall that, by \cite[Lemma 5.2]{Amar:Andreucci:Gianni:2014a}
which holds even in the present case,
$(u,u^1)\in \CC^0\big((0,T];H^1(\Om)\big)\times \CC^0\big((0,T];L^2(\Om;\X^1_\#(Y))\big)$ and
$[u^1]\in \CC^0\big((0,T];L^2(\Om\times \Permemb)\big)$.

As usual, let $(r,r^{1}):=(u^{\#}-u,u^{1,\#}-u^{1})$, so that the pair $(r,r^{1})$ satisfies:
\begin{multline}\label{eq:a62}
\int_{0}^{t}\int_{\Om}\int_{Y}\dfboth\left(\nabla r+\nabla_{y}r^{1}\right) \left(\nabla\phi+\nabla_{y}\Phi \right)
\di x\di y\di t
\\
+\int_{0}^{t}\int_{\Om}\int_{\Permemb}\frac{f([u^{1,\#}])-f([u^{1}])}{[u^{1,\#}]-[u^{1}]}[r^{1}][\Phi]
\di x\di \sigma\di t
+\alpha \int_{0}^{t}\int_{\Om}\int_{\Permemb}[r^{1}_t][\Phi]\di x\di \sigma\di t=0\,,\qquad\forall t\in(0,T)\,,
\end{multline}
where $r=0$ on $\partial \Omega \times [0,T]$ in the trace sense,
$r^{1}$ is periodic in $Y$ and has zero mean value in $Y$ for almost every $(x,t)\in\Om\times (0,T)$.
%while $\widehat S_{1}:=[r^{1}(x,0)]$ still satisfies assumption \eqref{eq:assumpt2}.
Here $\phi $ is any regular function depending on $(x,t)$, with compact
support in $\Om$ and $\Phi $ is a any function depending on $(x,y,t)$
which jumps across $\Permemb$, is zero when $t=T$ and is regular
elsewhere.
%Integrating by parts the third integral in the left hand side of (5.7) and
Differentiating \eqref{eq:a62} with respect to $t$, we get
\begin{multline}\label{eq:a63}
\int_{\Om}\int_{Y} \dfboth \left( \nabla r+\nabla_{y}r^{1}\right) \left(\nabla \phi +\nabla_{y}\Phi \right) \di x\di y
+\int_{\Om}\int_{\Permemb}\frac{f([u^{1,\#}])-f([u^{1}])}{[u^{1,\#}]-[u^{1}]}[r^{1}][\Phi ]\di x\di\sigma
\\
+\alpha \int_{\Om}\int_{\Permemb}[r^{1}_t][\Phi ]\di x\di \sigma =0\,.
\end{multline}
Replacing $\left( \phi ,\Phi \right) $ with $\left(r,r^{1}\right)$ in \eqref{eq:a63}, we get
\begin{multline}\label{eq:a64}
\int_{\Om}\int_{Y}\dfboth |\nabla r+\nabla_{y}r^{1}|^{2}\di x\di y
+\int_{\Om}\int_{\Permemb}\frac{f([u^{1,\#}])-f([u^{1}])}{[u^{1,\#}]-[u^{1}]}[r^{1}]^{2}\di x\di\sigma
\\
+\alpha \int_{\Om}\int_{\Permemb}[r^{1}_t][r^{1}]\di x\di\sigma =0\,.
\end{multline}
As in Section \ref{s:asymptotic1}, equation \eqref{eq:a64} implies that
the function $t\mapsto \alpha \int_{\Om}\int_{\Permemb}[ r^{1}(x,t)]^{2}\di\sigma \di x$ is a positive,
decreasing function of $t$, hence it tends to a limit value $\overline r^1\geq 0$ as $t\to +\infty $.
The value $\overline r^1$ must be zero otherwise
$\alpha\int_{\Om}\int_{\Permemb}[r^{1}]^{2}\di\sigma \di x\geq {\overline r^1}>0$
for every $t>0$. On the other hand, for $t>0$ and setting $\Permemb_{\overline r^1}(t):=\left\{ (x,y)\in \Om\times \Permemb
:[r^1]^{2}(x,y,t)\leq \frac{\overline r^1}{2\alpha |\Permemb|\,|\Om|}\right\}$,
reasoning as in the proof of Theorem \ref{t:t5}, it follows that
\begin{equation*}
\alpha \int_{\Om}\int_{\Permemb\setminus\Permemb_{\overline r^1}(t)}[r^{1}(x,y,t)]^{2}\di\sigma \di x
\geq \frac{\overline r^1}{2}\,, \quad \forall t>0\,.
\end{equation*}
However, on $\Permemb\setminus\Permemb_{\overline r^1}$, $g(x,y,t):=\frac{f([u^{1,\#}])-f([u^{1}])}{[u^{1,\#}]-[u^{1}]}\geq \chi>0$, where $\chi $
is a suitable positive constant depending only on $\overline r^1,\alpha ,|\Permemb|, |\Om|$
(this last result follows from the assumptions \eqref{eq:a37}--\eqref{eq:a38}).
Hence, using \eqref{eq:a64}, we get
\begin{multline}\label{eq:a65}
\frac{d}{dt}\left( \frac{\alpha}{2} \int_{\Om}\int_{\Permemb}[r^{1}(x,y,t)]^{2}\di\sigma \di x\right)
\leq -\int_{\Om}\int_{\Permemb\setminus\Permemb_{\overline r^1}(t)}g(x,y,t)
[r^{1}(x,y,t)]^{2}\di \sigma \di x
\\
\leq -\chi\int_{\Om}\int_{\Permemb\setminus\Permemb_{\overline r^1}(t)}
[r^{1}(x,y,t)]^{2}\di\sigma \di x
\leq -\frac{\overline r^1}{2\alpha }\chi<0\,.
\end{multline}
Inequality \eqref{eq:a65} clearly contradicts the asymptotic convergence for $t\to +\infty$
of $\alpha \int_{\Om}\int_{\Permemb}[ r^{1}]^{2}(x,y,t)\di\sigma \di x$ to a positive number, hence
\begin{equation}\label{eq:a66}
\lim_{t\to +\infty }\alpha \int_{\Om}\int_{\Permemb}[r^{1}(x,y,t)]^{2}\di\sigma\di x=0\,,
\end{equation}
which is exactly \eqref{eq:decayperiodic3_neww}.
Integrating \eqref{eq:a64} in $[t,\infty )$ and taking into account \eqref{eq:a66},
we get
\begin{equation}\label{eq:a67}
\int_{t}^{+\infty }\int_{\Om}\int_{Y}\dfboth |\nabla r+\nabla_{y}r^{1}|^{2}\di x\di y\di t
\leq \frac{\alpha }{2}\int_{\Om}\int_{\Permemb}[r^{1}(x,y,t)]^{2}\di\sigma \di x\,,
\end{equation}
which implies
\begin{equation}\label{eq:a68}
\lim_{t\to+\infty }\int_{t}^{+\infty }\int_{Y}\int_{\Om}\dfboth|\nabla r+\nabla_{y} r^{1}|^{2}\di x\di y\di t=0\,.
\end{equation}
This last condition guarantees that for every positive $\eta$ there exists a $\widehat{t}(\eta)>0$, such that
\begin{equation*}
\int_{\widehat{t}}^{+\infty}\int_{\Om}\int_{Y}\dfboth|\nabla r+\nabla_{y}r^{1}|^{2}\di x\di y\di t
\leq \eta\,,
\end{equation*}
which in turn implies that, for every $n\in\NN$, there exists a $t_{n}\in (\widehat{t}+n,\widehat{t}+(n+1))$,
such that
\begin{equation}\label{eq:a69}
\int_{\Om}\int_{Y}\dfboth|\nabla r(x,t_n)+\nabla_{y}r^{1}(x,y,t_n)|^{2}\di x\di y
\leq \eta\,.
\end{equation}
Hence, replacing $( \phi ,\Phi) $ with $( r_{t},r_{t}^{1}) $ in \eqref{eq:a63}, we get
\begin{multline}\label{eq:a70}
\int_{\Om}\int_{Y}\dfboth (\nabla r+\nabla_{y}r^{1})(\nabla r_{t}+\nabla_{y}r_{t}^{1})\di x\di y
+\int_{\Om}\int_{\Permemb} g(x,y,t)[r^{1}][r_{t}^{1}] \di \sigma \di x
\\
+\alpha \int_{\Om}\int_{\Permemb}[ r_{t}^{1}(x,y,t)]^{2}\di\sigma \di x=0\,,
\end{multline}
and
\begin{equation}\label{eq:a71}
\int_{\Om}\int_{Y}\dfboth (\nabla r+\nabla_{y}r^{1})(\nabla r_{t}+\nabla_{y}r_t^{1})\di x\di y
\leq \int_{\Om}\int_{\Permemb}\frac{g^{2}(x,y,t)}{2\alpha }[ r^{1}(x,y,t)]^{2}\di \sigma \di x\,.
\end{equation}
Moreover, integrating \eqref{eq:a71} in $[t_n,t^*]$, with $t^*\in[t_{n},t_{n}+2]$, we have
\begin{multline}\label{eq:a72}
\sup_{t\in [t_{n},t_{n}+2]}\left(\int_{\Om}\int_{Y}\frac{\dfboth}{2}
| \nabla r(x,t)+\nabla_{y}r^{1}(x,y,t)|^{2}\di x\di y\right)
\\
\leq \frac{\eta}{2}+\frac{2L^{2}}{2\alpha^{2}}\sup_{t\in[t_{n},+\infty )}
\left( \alpha \int_{\Om}\int_{\Permemb}[r^{1}(x,y,t)]^{2}\di\sigma \di x\right)\,,
\qquad \forall n\in\NN\,;
\end{multline}
i.e.,
\begin{multline}\label{eq:a73}
\sup_{t\in [\widehat{t}+1,+\infty )}\left( \int_{\Om}\int_{Y}\frac{\dfboth}{2}
|\nabla r(x,t)+\nabla_{y}r^{1}(x,y,t)|^{2}\di x\di y\right)
\\
\leq \frac{\eta}{2}+\frac{L^{2}}{\alpha^{2}}\sup_{t\in [\widehat{t},+\infty )}
\left( \alpha \int_{\Om}\int_{\Permemb}[r^{1}(x,y,t)]^{2}\di\sigma\right).
\end{multline}
Because of \eqref{eq:a66} the integral in the right-hand side of \eqref{eq:a73} can be made
smaller than $\frac{\eta}{2}\left( \frac{L^{2}}{\alpha^{2}}\right)^{-1}$,
provided $\widehat{t}$ is chosen sufficiently large in dependence of $\eta$.
This means that
\begin{equation}\label{eq:a74}
\sup_{t\in [\widehat{t}+1,+\infty )}\left(\int_{\Om} \int_{Y}\frac{\dfboth}{2}
| \nabla r(x,t)+\nabla_{y}r^{1}(x,y,t)|^{2}\di x\di y\right) \leq \eta\,.
\end{equation}
\bigskip
Inequality \eqref{eq:a74} implies
\begin{equation}\label{eq:a75}
\lim_{t\to +\infty }\int_{\Om} \int_{Y}\dfboth|\nabla r(x,t)+\nabla_{y}r^{1}(x,y,t)|^{2}\di x\di y=0\,.
\end{equation}
Now, working as done in \eqref{eq:a20},
we get
\begin{equation*}
\lim_{t\to +\infty }\int_{\Om}|\nabla r(x,t)|^{2}\di x\di y =0\,;
\qquad\text{and}\qquad
\lim_{t\to +\infty }\int_{\Om}\int_{Y}|\nabla_{y}r^{1}(x,y,t)|^{2}\di x\di y=0\,.
\end{equation*}
%which gives \eqref{eq:decayperiodic1_neww}.
Finally, the previous results together with \eqref{eq:a66}  and Poincare's inequalities yield
\begin{equation*}
\lim_{t\to+\infty }\int_{\Om}|r(x,t)|^{2}\di x=0\,;
\qquad\text{and}\qquad
\lim_{t\to+\infty }\int_{\Om}\int_{Y}|r^{1}(x,y,t)|^{2}\di x=0\,,
\end{equation*}
which give \eqref{eq:decayperiodic_neww} and \eqref{eq:decayperiodic1_neww} and conclude the proof.
\end{proof}

%\begin{remark}\label{r:rem11}
%Observe that this asymptotic convergence result implies uniqueness of the
%periodic solution $(u^{\#},u^{1,\#})$ in the class of functions specified
%above.
%\end{remark}

\begin{remark}\label{r:rem10trisnew}
More in general, the previous procedure allows us to prove that solutions of \eqref{eq:PDE_limit}--\eqref{eq:BoundData_limit}
having different initial data satisfying the assumptions stated at the beginning of this section
but with the same boundary condition tend asymptotically one to the other (such convergence  being exponential if $f$ is coercive in the sense of \eqref{eq:intro1}).
\end{remark}

\begin{remark}\label{r:rem10bisnew}
Observe that, thanks to previous remark, Theorem \ref{t:t5} implies uniqueness of the periodic solution $(u^{\#},u^{1,\#})$
of problem \eqref{eq:PDE_limit_per}--\eqref{eq:BoundData_limit_per} in $\CC_\#^0\big([0,1];H^1(\Om)\big)
\times \CC^0_\#\big([0,1];L^2(\Om;\X^1_\#(Y))\big)$.
\end{remark}

\bibliographystyle{abbrv}
\bibliography{damiro1}

\vfill\eject
\end{document}